\documentclass[a4paper,11pt]{article}
\usepackage{authblk}
\usepackage{mathrsfs}
\usepackage{graphics}
\usepackage{latexsym}
\usepackage{cite}
\usepackage{graphicx,psfrag}
\usepackage{amsmath,amssymb,amsthm,mathtools}
\setlength{\topmargin}{-0.5cm}
\setlength{\oddsidemargin}{0.2cm}
\setlength{\evensidemargin}{0.2cm}
\setlength{\textheight}{21cm}
\setlength{\textwidth}{16cm}
\setlength{\footskip}{2cm}
\setlength{\columnsep}{1cm}
\usepackage[bf, small]{titlesec}
\usepackage{relsize}

%------------------------------------------------------------------------
\newtheorem{Thm}{Theorem}

\newtheorem{Prop}[Thm]{Proposition}
\newtheorem{Rem}[Thm]{Remark}
\newtheorem{Cor}[Thm]{Corollary}

\title{On consecutive edge magic total labeling of connected bipartite   graphs}
\author[1]{Bumtle Kang\thanks{This work was partially supported by
the National Research Foundation of Korea(NRF) grant funded by the Korea government(MEST) (No.\ NRF-2010-0009933).}}
\author[1]{Suh-Ryung Kim\textsuperscript{$\ast$}}
\author[2]{Ji Yeon Park\thanks{Corresponding author:  jypark0902@khu.ac.kr}}
\affil[1]{Department of Mathematics Education,
Seoul National University, Seoul 151-742, Korea} \affil[2]{Department of Mathematics, Kyung Hee University, Seoul 130-701, Korea}

\begin{document}
\maketitle
\begin{abstract}
Since Sedl\'{a}$\breve{\mbox{c}}$ek introduced the notion of magic labeling of a graph in 1963, a variety of magic labelings of a graph have been defined and studied. In this paper, we study consecutive edge magic labelings of a connected bipartite graph.  We make a very useful observation that there are only four possible values of $b$ for which a connected bipartite graph has a $b$-edge consecutive magic labeling. On the basis of this fundamental result, we deduce various interesting results on consecutive edge magic labelings of bipartite graphs, especially caterpillars and lobsters, which extends the results given by  Sugeng and Miller~\cite{SM}.
\end{abstract}

\noindent
{\bf Keywords:} Consecutive edge magic total labeling; Super edge-magic labeling; Magic constant; Graceful labeling; Bipartite graphs; Caterpillar; Double star; Lobster.

\smallskip \noindent{\small {{MSC2010:} 05C78}}
\section{Introduction}
Since Sedl\'{a}$\breve{\mbox{c}}$ek~\cite{se} introduced the notion of magic labeling of a graph, a variety of magic labelings of a graph have been defined and studied (see Gallian~\cite{Gal}). Kotzig and Rosa~\cite{KR}   introduced the notion of a magic valuation of a graph in 1970. A {\em magic valuation} of a graph $G=(V,E)$ is a bijection $f$ from $V \cup E$ to $\{1,2,\ldots, |V \cup E|\}$ such that for all edge $xy$, $f(x)+f(y)+f(xy)$ is a constant called the {\em magic constant} of $f$. Given a graph $G$, a positive integer $k$ is said to be a magic constant of $G$ if $k$ is the magic constant of a magic valuation of $G$.  Later, Ringel and Llad\'{o}~\cite{RL} rediscovered this notion and called it {\em edge-magic labeling}. More recently, Wallis~\cite{Wal} used the term {\em edge-magic total} labeling to distinguish it from other kinds of labeings that use the word magic.

An edge-magic total labeling is called a {\em $b$-edge consecutive magic labeling} if the edge labels are $\{b+1, b+2, \ldots, b+|E|\}$ where $0 \le b \le |V|$. Sugeng and Miller~\cite{SM} claimed that if a connected graph $G$ with $n$ vertices has a $b$-edge consecutive magic labeling with $1 \le b \le n-2$, then $G$ is a tree; the union of $r$ stars and a set of $r-1$ isolated vertices has an $r$-edge consecutive magic labeling.

In this paper, we extend these existing results about  consecutive edge magic labelings of graphs in the following way. We find all the values of $b$ for which a connected bipartite graph has a $b$-edge consecutive magic labeling; We show that a connected bipartite graph $G=(X,Y)$ having an $|X|$-edge consecutive magic labeling is a tree having a  graceful labeling and a super edge-magic labeling; We give a necessary and sufficient condition for a caterpillar having a $b$-edge consecutive magic labeling, which actually answers an open problem posed in Wallis~\cite{Wal}; We also obtain an interesting result on consecutive edge magic labelings for lobsters.

For any undefined term, the reader may refer to \cite{k10}.

\section{Properties of edge consecutive magic labelings of bipartite graphs}
We first present the following proposition which is simple but very useful.
\begin{Prop}
Suppose that $G$ has a $b$-edge consecutive magic labeling $\lambda$
for $1 \le b \le |V(G)|$. Then, if $y$ and $z$ are neighbors of $x$,
then either $\{\lambda(y),\lambda(z)\} \subset \{1,\ldots, b\}$ or $\{\lambda(y),\lambda(z)\} \subset \{b+|E(G)|+1, \ldots, |V(G)|+|E(G)|\}$.
\label{prop:neighbor}
\end{Prop}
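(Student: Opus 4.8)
The plan is to exploit the rigidity of the edge-sum condition together with the explicit description of the vertex labels that is forced by the edge labels. First I would record that, since $\lambda$ is a bijection onto $\{1,\dots,|V(G)|+|E(G)|\}$ whose edge labels occupy precisely the block $\{b+1,\dots,b+|E(G)|\}$, the vertex labels must fill the complementary set $\{1,\dots,b\}\cup\{b+|E(G)|+1,\dots,|V(G)|+|E(G)|\}$. Write $L=\{1,\dots,b\}$ for the ``low'' vertex labels and $H=\{b+|E(G)|+1,\dots,|V(G)|+|E(G)|\}$ for the ``high'' ones; the assertion is exactly that the two neighbors $y,z$ of $x$ lie on the same side of the gap between $L$ and $H$.

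Next I would extract the key quantitative fact. Letting $k$ denote the magic constant of $\lambda$, every edge $xy$ satisfies $\lambda(x)+\lambda(y)=k-\lambda(xy)$, and since $\lambda(xy)$ ranges over the $|E(G)|$ consecutive values $b+1,\dots,b+|E(G)|$, the vertex-sum $\lambda(x)+\lambda(y)$ ranges over the block of $|E(G)|$ consecutive integers $\{k-b-|E(G)|,\dots,k-b-1\}$. In particular, the vertex-sums attached to any two edges differ by at most $|E(G)|-1$.

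The conclusion then follows by comparing the two edges incident to $x$. I would argue by contradiction: if, say, $\lambda(y)\in L$ and $\lambda(z)\in H$, then $\lambda(z)-\lambda(y)\ge (b+|E(G)|+1)-b=|E(G)|+1$, so the sums $\lambda(x)+\lambda(y)$ and $\lambda(x)+\lambda(z)$ would differ by at least $|E(G)|+1$, exceeding the bound $|E(G)|-1$ established above. This contradiction forces $\lambda(y)$ and $\lambda(z)$ into the same block, which is precisely the claim.

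The one point I would check most carefully — and the only place anything could go wrong — is the width of the forbidden gap between $L$ and $H$: the high labels begin at $b+|E(G)|+1$ while the low labels end at $b$, so the gap has width $|E(G)|+1$, and it is exactly this that beats the maximal edge-sum spread of $|E(G)|-1$. Everything else is bookkeeping; notably the argument uses nothing about bipartiteness, only the combinatorial structure of a $b$-edge consecutive magic labeling.
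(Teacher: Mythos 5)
Your proof is correct and is essentially the paper's own argument in different packaging: the paper cancels $\lambda(x)$ from the two magic sums at $x$ to get $\lambda(z)-\lambda(y)=\lambda(xy)-\lambda(xz)$, whose absolute value is at most $|E(G)|-1$ because the edge labels are consecutive, and contradicts this with the gap of $|E(G)|+1$ between the low and high vertex blocks --- exactly your comparison of edge-sum spread versus gap width, stated via the magic constant $k$ instead of by direct cancellation. No gap; your observation that bipartiteness plays no role is also consistent with the paper, which applies the proposition to arbitrary graphs later (e.g., in Theorem~\ref{nonbipartite}).
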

\begin{proof}
Without loss of generality, we may assume that $\lambda(y) < \lambda (z)$. Suppose to the contrary that $\lambda(z) > b$ and $\lambda(y) < b+|E(G)|+1$. Then, by the definition of $b$-edge consecutive magic labeling, $\lambda(z) \ge b+|E(G)|+1$ and $\lambda(y) \le b$. Since $\lambda$ is an edge-magic total labeling,
\[\lambda(x)+\lambda(y)+\lambda(xy)=\lambda(x)+\lambda(z)+\lambda(xz),\]
which implies
\begin{equation}
\lambda(y)+\lambda(xy)=\lambda(z)+\lambda(xz)
\label{eq1}
\end{equation}
Since $\lambda$ is a $b$-edge consecutive magic labeling,
$\lambda(xz) \ge b+1$. From (\ref{eq1}) and the assumption that $\lambda(z) \ge b+|E(G)|+1$, we obtain
\[\lambda(y)+\lambda(xy) \ge (b+|E(G)|+1)+(b+1)=2b+|E(G)|+2.\]
Again, by the definition of $b$-edge consecutive magic labeling,
$\lambda(xy) \le b+|E(G)|$. Thus we have
\[\lambda(y) \ge 2b+|E(G)|+2-(b+|E(G)|)=b+2,\]
which contradicts the assumption that $\lambda(y) \le b$.
\end{proof}
The following theorem shows that there are only four possible  values of $b$ for
which a connected bipartite graph has a $b$-edge consecutive magic labeling.
\begin{Thm}
If a connected bipartite graph $G=(X,Y)$ has a $b$-edge consecutive magic labeling, then $b \in \{0,|X|,|Y|,|X|+|Y|\}$.
\label{thm:bipartite}
\end{Thm}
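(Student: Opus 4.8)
The plan is to translate the conclusion into a statement about the location of the vertex labels. First I would record the bookkeeping: since $\lambda$ is a bijection onto $\{1,\ldots,|V(G)|+|E(G)|\}$ and the edge labels occupy $\{b+1,\ldots,b+|E(G)|\}$, the vertex labels are exactly $\{1,\ldots,b\}\cup\{b+|E(G)|+1,\ldots,|V(G)|+|E(G)|\}$. Call a vertex \emph{small} if $\lambda(v)\le b$ and \emph{large} otherwise; then every vertex is small or large, and the number of small vertices is precisely $b$. Consequently it suffices to show that the set of small vertices is one of $\emptyset$, $X$, $Y$, or $X\cup Y$, because then $b$ counts one of $0$, $|X|$, $|Y|$, $|X|+|Y|$. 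If $b=0$ the conclusion already holds, so I may assume $1\le b\le|V(G)|$ and invoke Proposition~\ref{prop:neighbor}.

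The heart of the argument is the claim that any two vertices lying in the same part are of the same type, i.e.\ both small or both large. I would first establish the special case in which two vertices $a,a'$ of the same part have a common neighbor $c$. Applying Proposition~\ref{prop:neighbor} with $x=c$, $y=a$, $z=a'$ shows at once that $\lambda(a)$ and $\lambda(a')$ lie in the same block, $\{1,\ldots,b\}$ or $\{b+|E(G)|+1,\ldots,|V(G)|+|E(G)|\}$; hence $a$ and $a'$ have the same type.

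To upgrade this from common-neighbor pairs to arbitrary same-part pairs I would use connectedness together with bipartiteness. Given $u,v$ in the same part, choose a path $u=w_0,w_1,\ldots,w_{2k}=v$; since $G$ is bipartite and $u,v$ lie in the same part, the path has even length, so $w_0,w_2,\ldots,w_{2k}$ all lie in that part and each consecutive pair $w_{2i},w_{2i+2}$ shares the neighbor $w_{2i+1}$. Chaining the special case along the path forces $u$ and $v$ to have the same type. Thus all of $X$ has one common type and all of $Y$ has one common type, so the set of small vertices is a union of parts and $b\in\{0,|X|,|Y|,|X|+|Y|\}$. The only delicate point is the parity/chaining step, which is exactly where bipartiteness and connectedness enter; everything else is immediate from Proposition~\ref{prop:neighbor}.
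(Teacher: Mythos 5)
Your proposal is correct and follows essentially the same route as the paper: both use Proposition~\ref{prop:neighbor} to propagate the ``small''/``large'' type along a path between two vertices of the same part (your common-neighbor step plus chaining is exactly the paper's traversal of $P=x_1w_1x_2w_2\cdots x_lw_lx'$), concluding that each partite set is monochromatic in type. The only cosmetic difference is at the end, where you count the small vertices exactly ($b$ of them) while the paper deduces $b=|X|$ or $b=|Y|$ from the inequalities $|X|\le b$ and $|Y|\le |X|+|Y|-b$; both finishes are equally valid.
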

\begin{proof}
For simplicity, we denote the set $\{1, 2, \ldots, b\}$ by $[b]$. By the definition of a $b$-edge consecutive magic labeling, it is sufficient to show that $b=|X|$ or $|Y|$ if $1 \le b < |X|+|Y|$. Suppose that $1 \le b < |X|+|Y|$. We claim that either $\lambda(X) \subset [b]$ or $\lambda(X) \subset \{b+|E(G)|+1,\ldots, |X|+|Y|+|E(G)|\}$. Take two vertices $x$ and $x'$ in $X$.
%By the definition of $b$-edge consecutive magic labeling, either $\lambda(x) \le b$ or $\lambda(x) \ge b+|E(G)|+1$, and $\lambda(X) \cap \lambda(Y)=\emptyseb$.
Since $G$ is connected, there is an $(x,x')$-path $P$ in $G$. Since $G$ is bipartite,
\[P=x_1w_1x_2w_2x_3\cdots x_lw_lx'\]
where $x=x_1$ and $x_i \in X$ and $w_i \in Y$ for $i=1$, $\ldots$, $l$. Then, by Proposition~\ref{prop:neighbor},   if $\lambda(x) \le b$, then $\lambda(x_i) \le b$ for $i=2$, $\ldots$, $l$ and so $\lambda(x') \le b$, and  if $\lambda(x) \ge b+|E(G)|+1$, then  $\lambda(x_i) \ge b+|E(G)|+1$ for $i=2$, $\ldots$, $l$ and so $\lambda(x') \ge b+|E(G)|+1$.  Since $x$ and $x'$ were arbitrarily chosen, $\lambda(X) \subset [b]$ or $\lambda(X) \subset \{b+|E(G)|+1,\ldots, |X|+|Y|+|E(G)|\}$. Similarly we may show that  $\lambda(Y) \subset [b]$ or $\lambda(Y) \subset \{b+|E(G)|+1,\ldots, |X|+|Y|+|E(G)|\}$.

If $\lambda(X) \subset [b]$ and $\lambda(Y) \subset [b]$, then $b \ge |X|+|Y|$, which contradicts the assumption. If $\lambda(X) \subset \{b+|E(G)|+1,\ldots, |X|+|Y|+|E(G)|\}$ and $\lambda(Y) \subset \{b+|E(G)|+1,\ldots, |X|+|Y|+|E(G)|\}$, then $b=0$, which contradicts the assumption again. Therefore if $\lambda(X) \subset \{b+|E(G)|+1,\ldots, |X|+|Y|+|E(G)|\}$, then $\lambda(Y) \subset [b]$, or $\lambda(Y) \subset \{b+|E(G)|+1,\ldots, |X|+|Y|+|E(G)|\}$, then $\lambda(X) \subset [b]$.

If $\lambda(X) \subset [b]$ and $\lambda(Y) \subset \{b+|E(G)|+1,\ldots, |X|+|Y|+|E(G)|\}$, then $|X| \le b$ and $|Y| \le |X|+|Y|-b$, which implies $b=|X|$. If $\lambda(X) \subset \{b+|E(G)|+1,\ldots, |X|+|Y|+|E(G)|\}$ and $\lambda(Y) \subset [b]$, then $|X| \le |X|+|Y|-b$ and $|Y| \le b$, which implies $b=|Y|$.
\end{proof}
Suppose that a graph $G$ with $m$ edges has a labeling of its vertices with some subset of $\{0,1,\ldots,m\}$ such that no two vertices share a label and the edge labels are the set $\{1,2,\ldots,m\}$ where an edge label is the difference of the values assigned to its end vertices. Then $G$ is said to be {\em graceful} and such a labeling is called a {\em graceful labeling} of $G$.

 A $b$-edge consecutive magic labeling of a graph is called a {\it super edge-magic labeling} if $b=|V(G)|$. A graph $G$ is said to be {\it super edge-magic} if it has a  super edge-magic labeling.   Enomoto~{\it et al.}~\cite{ELNR} conjectured that every tree is super-edge magic, which still remains open.

We obtain an interesting result related to graceful labeling and super edge-magic labeling.  As a matter of fact, it gives a stronger version of the result by Sugeng and Miller~\cite{SM} stating that if a connected graph $G$ with $n$ vertices has a $b$-edge consecutive magic labeling with $1 \le b \le n-2$, then $G$ is a tree. Furthermore, the conjecture given by Enomoto~{\it et al.}~\cite{ELNR} is true for a tree $T$ that has an $|X|$-edge consecutive magic labeling where $X$ is one of bipartitions of $T$ when it is considered as a bipartite graph.

We first show the following.
\begin{Thm} \label{nonbipartite} If a connected non-bipartite graph $G$ has a
$b$-edge consecutive magic  labeling, then $b= 0$ or $|V(G)|$.
\end{Thm}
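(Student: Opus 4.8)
The plan is to show that in a connected non-bipartite graph the labeling $\lambda$ is forced to put \emph{all} vertices into the same block of labels, and then to read off $b$ by counting. Since $b=0$ is already one of the two allowed values, I assume $1\le b\le|V(G)|$ so that Proposition~\ref{prop:neighbor} is available. For brevity, call a vertex $v$ \emph{low} if $\lambda(v)\in\{1,\ldots,b\}$ and \emph{high} if $\lambda(v)\in\{b+|E(G)|+1,\ldots,|V(G)|+|E(G)|\}$; because the edge labels occupy exactly $\{b+1,\ldots,b+|E(G)|\}$, each vertex is of precisely one of these two types. In this language Proposition~\ref{prop:neighbor} says simply that any two neighbors of a common vertex have the same type, so every vertex $x$ determines a well-defined type shared by all of its neighbors.

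First I would exploit the odd cycle guaranteed by non-bipartiteness. Let $C=v_1v_2\cdots v_{2k+1}v_1$ be an odd cycle in $G$. Applying Proposition~\ref{prop:neighbor} to the vertex $v_{i+1}$, whose neighbors on $C$ are $v_i$ and $v_{i+2}$ (indices read modulo $2k+1$), shows that $v_i$ and $v_{i+2}$ have the same type for every $i$. Thus the type is constant along steps of $2$ around $C$, and since $\gcd(2,2k+1)=1$ these steps visit every vertex of $C$; hence all vertices of $C$ share a common type $\tau$. (Equivalently, one may note that vertices joined by a walk of even length always share a type, and that in a connected non-bipartite graph every two vertices are joined by an even walk.)

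Next I would propagate $\tau$ to the whole graph using connectivity. Given an arbitrary vertex $w$, choose a path $u_0u_1\cdots u_m=w$ with $u_0$ on $C$. I claim each $u_i$ has type $\tau$, proving this by induction together with the auxiliary statement that $u_i$ has a neighbor of type $\tau$. The base case holds because $u_0\in C$ has type $\tau$ and has a $C$-neighbor of type $\tau$. For the inductive step, if $u_i$ has type $\tau$ and a neighbor $p$ of type $\tau$, then $p$ and $u_{i+1}$ are both neighbors of $u_i$, so Proposition~\ref{prop:neighbor} forces $u_{i+1}$ to have type $\tau$; moreover $u_{i+1}$ then has the neighbor $u_i$ of type $\tau$. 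Hence $w$ has type $\tau$, and since $w$ was arbitrary every vertex of $G$ has type $\tau$.

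Finally I would resolve the value of $b$ by counting. If $\tau$ is the low type, then $\lambda(V(G))\subseteq\{1,\ldots,b\}$ gives $|V(G)|\le b$, which with $b\le|V(G)|$ yields $b=|V(G)|$; if $\tau$ is the high type, then $\lambda(V(G))\subseteq\{b+|E(G)|+1,\ldots,|V(G)|+|E(G)|\}$ gives $|V(G)|\le|V(G)|-b$, i.e.\ $b\le 0$, contradicting $b\ge 1$. Together with the trivial case $b=0$ set aside at the outset, this shows $b\in\{0,|V(G)|\}$. The one step that carries the real weight is the cycle argument of the second paragraph: the entire conclusion depends on $C$ having \emph{odd} length, since for an even cycle the steps of $2$ break into two orbits and the two parity classes may legitimately receive opposite types — precisely the bipartite behavior permitted by Theorem~\ref{thm:bipartite}. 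Everything else is routine induction and bookkeeping.
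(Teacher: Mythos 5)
Your proof is correct and rests on the same two pillars as the paper's own argument: Proposition~\ref{prop:neighbor} to propagate the low/high dichotomy between vertices with a common neighbor, and the odd cycle to defeat the parity obstruction that protects bipartite graphs. The paper packages this as a single even-walk construction (splicing the odd cycle into a concatenation of two paths to fix parity, then propagating ``low'' from a vertex guaranteed to exist by $b\ge 1$), whereas you first make the cycle monochromatic via steps of two (using $\gcd(2,2k+1)=1$) and then propagate along paths by induction, eliminating the ``all high'' case by counting --- a presentational rather than substantive difference, as your own parenthetical remark about even walks acknowledges.
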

\begin{proof}
It suffices to show that if $b \ge 1$, then $b =|V(G)|$. Since $b \ge 1$, there is a vertex $x$ in $G$ such that $\lambda(x) \le b$. Take a vertex $y$ in $G$. Since $G$ is not a bipartite graph, there is an odd cycle $C$ in $G$. Let $w$ be a vertex on $C$. Since $G$ is connected, there is an $(x,w)$-path $P_1$ and a $(w,y)$-path $P_2$ in $G$.
Let $W$ be a walk which is obtained by concatenating $P_1$ and $P_2$ if the sum of lengths of $P_1$ and $P_2$ is even,  and by concatenating $P_1$, $C$, and $P_2$ if the sum of lengths of $P_1$ and $P_2$ is odd. In both cases, the walk $W$ is an $(x,y)$-walk of even length. Thus, by Proposition~\ref{prop:neighbor}, $\lambda(y) \le b$ since $\lambda(x) \le b$. Since $y$ was arbitrarily chosen, $\lambda(v) \le b$ for any $v \in V(G)$. Thus $|V(G)| \le b$, and therefore $|V(G)| = b$. \end{proof}
\noindent Let $G$ be a connected graph $G$ with $n$ vertices having a $b$-edge consecutive magic labeling for some $b \in [n-1]$. Then, by the above theorem, $G$ is bipartite. If $b=n-1$, then one of the partite sets of $G$ has size $n-1$ by Theorem~\ref{thm:bipartite} and so $G$ is a tree. Hence $G$ is a tree even for $b=n-1$.  The following theorem shows that $G$ should be a special tree that has both a graceful labeling and a super edge-magic labeling.
\begin{Thm}
If a connected bipartite graph $G=(X,Y)$ has an $|X|$-edge consecutive magic labeling, then $G$ is a tree having both a graceful labeling and a super edge-magic labeling.
\label{dual}
\end{Thm}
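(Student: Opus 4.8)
The plan is to reduce everything to one clean structural fact: a \emph{consecutive vertex-sum} condition, from which both target labelings fall out by explicit affine relabelings. Write $p=|X|$, $q=|Y|$, $n=p+q$ and $m=|E(G)|$. First I would pin down where the three label blocks sit. Re-reading the proof of Theorem~\ref{thm:bipartite} with $b=p$, the only admissible configurations are $\lambda(X)\subset[p]$ with $\lambda(Y)$ in the top block (this is the case that forces $b=|X|$), and its mirror image (which forces $b=|Y|$). When $|X|\neq|Y|$ the hypothesis $b=|X|$ selects the first configuration outright; when $|X|=|Y|$ the two configurations are interchangeable and the conclusion to be proved is symmetric in $X$ and $Y$, so I may assume the first in either case. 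This gives $\lambda(X)=\{1,\dots,p\}$, $\lambda(E(G))=\{p+1,\dots,p+m\}$ and $\lambda(Y)=\{p+m+1,\dots,p+m+q\}$.

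Next I would extract the consecutive-sum structure. For every edge $xy$ the edge-magic condition yields $\lambda(x)+\lambda(y)=k-\lambda(xy)$, where $k$ is the magic constant, so as $xy$ ranges over $E(G)$ the sums $\lambda(x)+\lambda(y)$ run over $m$ consecutive integers (the reflection of the consecutive block $\lambda(E(G))$). Since $\lambda(x)\in\{1,\dots,p\}$ for $x\in X$ and $\lambda(y)\in\{p+m+1,\dots,p+m+q\}$ for $y\in Y$, these $m$ distinct sums all lie in a single interval containing exactly $n-1$ integers; being consecutive, this forces $m\le n-1$, and connectivity forces $m=n-1$. Hence $G$ is a tree (in agreement with the discussion preceding the theorem), the $m$ sums exhaust the whole interval, and each value is attained exactly once.

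With that in hand the two constructions are routine. Put $g(x)=\lambda(x)$ on $X$ and $g(y)=\lambda(y)-m$ on $Y$; this is a bijection $V(G)\to\{1,\dots,n\}$ whose edge-sums $g(x)+g(y)$ are again $m$ consecutive integers, say filling $\{p+2,\dots,p+n\}$. For the super edge-magic labeling I keep $g$ on the vertices and assign to each edge $xy$ the label $(n+m+s)-\bigl(g(x)+g(y)\bigr)$, where $s=p+2$ is the least edge-sum: this maps the edges bijectively onto $\{n+1,\dots,n+m\}$ and makes every vertex-sum-plus-edge-label equal to the constant $n+m+s$, i.e.\ a $|V(G)|$-edge consecutive (super edge-magic) labeling. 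For the graceful labeling I reflect one part: set $\phi(x)=g(x)+q-1$ on $X$ and $\phi(y)=(p+q)-g(y)$ on $Y$. Then $\phi$ sends $X$ onto $\{q,\dots,n-1\}$ and $Y$ onto $\{0,\dots,q-1\}$, hence bijectively onto $\{0,\dots,n-1\}=\{0,\dots,m\}$, and for each edge $\phi(x)-\phi(y)=g(x)+g(y)-(p+1)$ runs exactly once over $\{1,\dots,m\}$. Since every edge joins $X$ to $Y$, these differences are automatically positive, so $\phi$ is a graceful labeling.

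The genuinely delicate part is not the constructions, which are mere affine changes of variable verified by direct substitution, but the bookkeeping of the first two steps: justifying that the hypothesis $b=|X|$ really forces $\lambda(X)=\{1,\dots,p\}$ (and disposing cleanly of the degenerate symmetric case $|X|=|Y|$), and spotting that reflecting the consecutive edge block through the magic constant converts it into a consecutive block of vertex-sums. Once the consecutive-sum property and the tree conclusion $m=n-1$ are secured, I would simply confirm the three claimed label ranges and the constancy of the magic value and of the difference set by substitution.
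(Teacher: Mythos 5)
Your proof is correct, and its two constructions coincide in substance with the paper's: after pinning down the three blocks (which the paper does via Proposition~\ref{prop:neighbor}, with the same symmetry reduction for the case $|X|=|Y|$), the paper likewise produces the graceful labeling by an affine map that keeps one side low and reflects the other, and the super edge-magic labeling by translating the labels on $Y$ down by $|E(G)|$ and adjusting the edge labels (it shifts them by $+|Y|$ where you reflect them through a constant; both are bijections onto the correct block). The genuinely different step is tree-ness. The paper does not prove it inside the theorem: it invokes the observation preceding the statement, which for $1\le b\le n-2$ rests on the quoted claim of Sugeng and Miller, and only the case $b=n-1$ is handled by the paper's own Theorem~\ref{thm:bipartite}. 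You derive $m=n-1$ internally: the vertex sums $\lambda(x)+\lambda(y)=k-\lambda(xy)$ form $m$ distinct (indeed consecutive) integers, yet all of them lie in $[p+m+2,\,2p+q+m]$, an interval containing only $n-1$ integers, so $m\le n-1$, while connectivity gives $m\ge n-1$. This pigeonhole is in fact implicit in the paper's graceful step, where $|E(G)|$ distinct differences must fit inside $\{1,\dots,|X|+|Y|-1\}$, but the paper never draws the tree conclusion from it. What your route buys is self-containedness --- the theorem no longer depends on the Sugeng--Miller result --- together with the exact identification of the interval that the edge sums fill, which is precisely what makes both of your relabelings verifiable by direct substitution.
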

\begin{proof}
By the above observation, $G$ is a tree. We first show that $G$ has a graceful labeling.
Let $\lambda$ be an $|X|$-edge consecutive magic labeling of $G$. Then, by Proposition~\ref{prop:neighbor}, $\lambda(X)=\{1,\ldots, |X|\}$, or $|X|=|Y|$ and $\lambda(Y)=\{1,\ldots, |X|\}$. By symmetry, we may assume that $\lambda(X)=\{1,\ldots, |X|\}$. Then, by the definition of $|X|$-edge consecutive labeling, $\lambda(Y)=\{|X|+|E(G)|+1, \ldots, |X|+|Y|+|E(G)|\}$. Therefore
%
%
%\{|X|+|E(G)|+1, \ldots, |X|+|Y|+|E(G)|\}$. Then, if $\lambda(X)=\{1,\ldots, |X|\}$, then $\lambda(Y)=\{|X|+|E(G)|+1, \ldots, |X|+|Y|+|E(G)|\}$, or if $\lambda(X)=\{|Y|+|E(G)|+1, \ldots, |X|+|Y|+|E(G)|\}$, then $\lambda(Y)=\{1,\ldots, |Y|\}$ b. By symmetry, we may assume that
%\begin{equation*}
%1 \le \lambda(x) \le |X| \quad \mbox{and} \quad|X|+|E(G)|+1 \le \lambda(y) \le|X|+|E(G)|+|Y|.
%\label{inequalities}
%\end{equation*}
\begin{equation}
\lambda(X)=\{1,\ldots,|X|\}, \quad \lambda(Y)=\{|X|+|E(G)|+1,\ldots,|X|+|E(G)|+|Y|\},
\label{all}
\end{equation}
and
\[\lambda(E(G))=\{|X|+1, \ldots,|X|+|E(G)|\}.\]
Since $|\lambda(E(G))|=|E(G)|$ and $\lambda(x)+\lambda(y)+\lambda(xy)$ is constant for each edge $xy$ of $G$,
\begin{equation}
|\{\lambda(x)+\lambda(y) \mid x \in X, y \in Y, xy \in E(G)\}|=|E(G)|.
\label{edge}
\end{equation}
We define a labeling $\varphi$ from $X \cup Y$ to $\{0, 1,  \ldots, |X|+|Y|-1\}$ as follows:\[\varphi(z)=
\begin{cases}
\lambda(z) -1 & \mbox{if } z  \in X;
\\  |E(G)|+2|X|+|Y| - \lambda(z ) &  \mbox{if } z  \in Y.
\end{cases}
\]
\noindent Then, by (\ref{all}),
\begin{equation}
\varphi(X)=\{0,1,\ldots,|X|-1\} \quad\mbox{ and } \varphi(Y)=\{|X|, \ldots, |X|+|Y|-1\}
\label{sets}
\end{equation}
and
\[1 \le \varphi(y)-\varphi(x) \le |X|+|Y|-1.\]
Since $G$ is connected, $|X|+|Y|-1 \le |E(G)|$ and so
\begin{equation}
1 \le \varphi(y)-\varphi(x) \le |X|+|Y|-1 \le |E(G)|.
\label{absolute}
\end{equation}

Now, for any pair of adjacent vertices $x \in X$ and $y \in Y$, $\varphi(y) > \varphi(x)$ and
\begin{eqnarray}
\varphi(y) - \varphi(x)
 	                      &=& (|E(G)|+2|X|+|Y| - \lambda(y)) - (\lambda(x) -1) \notag \\
 	                      &=& |E(G)|+2|X|+|Y|+1 -(\lambda(x) + \lambda(y)). \label{eqs}
\end{eqnarray}
Thus, by (\ref{edge}) and (\ref{eqs}),
$S:= \{\varphi(y)-\varphi(x) \mid x \in X, y \in Y, xy \in E(G)\}$ has $|E(G)|$ elements.
Therefore, by (\ref{absolute}),
\[S=\{1, 2, \ldots, |E(G)|\}\]
which, together with (\ref{sets}), implies that   $\varphi$ is a graceful labeling of $G$.

Now we show that $G$ has a super edge-magic labeling. Let $\varphi$ be an $|X|$-edge consecutive magic labeling. Then $\varphi(X)=\{1, \ldots, |X|\}$; $\varphi(E(G)) = \{|X|+1, \ldots, |X|+|E(G)|\}$; $\varphi(Y)= \{|X|+|E(G)|+1,\ldots, |X|+|Y|+|E(G)|\}$.

Now we define $\varphi^*:X \cup Y \cup E(G) \to \{1, \ldots, |X|+|Y|+|E(G)|\}$ by
\[\varphi^*(v)=\begin{cases} \varphi(v) & \mbox{ if } v \in X, \\
\varphi(v)-|E(G)| & \mbox{ if } v \in Y,\end{cases}\]
and, for each pair of adjacent vertices $x$ and $y$,
\[\varphi^*(xy)=\varphi(xy)+|Y|.\]
For an edge $xy$ of $G$ for $x \in X$, $y \in Y$,
\[\varphi^*(x)+\varphi^*(y)+\varphi^*(xy)=\varphi(x)+(\varphi(y)-|E(G)|)+(\varphi(xy)+|Y|),\]
which is a constant number. Thus $\varphi^*$ is an edge-magic labeling of $G$. By the definition of $\varphi^*$, $\varphi^*$ is an $(|X|+|Y|)$-edge consecutive magic labeling, that is, a super edge-magic labeling.
\end{proof}

%\begin{Thm}
%If a connected bipartite graph $G=(X,Y)$ has an $|X|$-edge consecutive magic labeling, then it is a tree having a super edge-magic labeling.
%\label{thm:super}
%\end{Thm}

Given a graph $G$, let $\gamma:V(G) \cup E(G) \to \{1,2,\ldots,|V(G)|+|E(G)|\}$ be an edge-magic labeling for a graph $G$. Define the labeling $\gamma':V(G) \cup E(G) \to \{1,2,\ldots,|V(G)|+|E(G)|\}$ as follows:
For a vertex $x$,
\[\gamma'(x)  = |V(G)|+|E(G)|+1-\gamma(x),\]
for an edge $xy$,
\[\gamma'(xy)=|V(G)|+|E(G)|+1-\gamma(xy).\]
Then $\gamma'$ is called the {\em dual} of $\gamma$. From Wallis~\cite{Wal}, we know that the dual of an edge-magic labeling of a graph $G$ is also an edge-magic labeling of $G$. Moreover, if $k$ is the magic constant corresponding to $\gamma$, then for any adjacent vertices $x$ and $y$ of $G$,
\begin{align*}
&\gamma'(x)+\gamma'(y)+\gamma'(xy) \\ & =(|V(G)|+|E(G)|+1-\gamma(x))+(|V(G)|+|E(G)|+1-\gamma(y))+
(|V(G)|+|E(G)|+1-\gamma(xy)) \\
 & =3(|V(G)|+|E(G)|+1)-(\gamma(x)+\gamma(y)+\gamma(xy))=3(|V(G)|+|E(G)|+1)-k,
\end{align*}
that is, $3(|V(G)|+|E(G)|+1)-k$ is the magic constant corresponding to $\gamma'$.

From the fact that the dual of an edge-magic labeling of a graph $G$ is also an edge-magic labeling of $G$, the following theorem is immediately true.
\begin{Thm}
For a connected bipartite graph $G=(X,Y)$, exactly one of the following is true:
\begin{itemize}
\item[(i)] $G$ does not have a $b$-edge consecutive magic labeling for any $b$;
\item[(ii)] $G$ has only  $0$-edge consecutive magic labeling and  super edge-magic labeling;
\item[(iii)] $G$ is a tree having a $b$-edge consecutive magic labeling for each $b=0$, $|X|$, $|Y|$, $|X|+|Y|$.
\label{dual1}
\end{itemize}
\label{eachcase}
\end{Thm}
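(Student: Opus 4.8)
The plan is to recover the trichotomy from the structure of the single set
\[
S=\{\, b : G \text{ has a } b\text{-edge consecutive magic labeling} \,\},
\]
using the two structural results already proved together with the duality operation just introduced. First I would invoke Theorem~\ref{thm:bipartite} to get $S\subseteq\{0,|X|,|Y|,|X|+|Y|\}$, so the whole question reduces to deciding which of these four values lie in $S$ and showing that only three combinations can arise. Since $|X|+|Y|=|V(G)|$, membership $|X|+|Y|\in S$ means exactly that $G$ is super edge-magic, so the three target cases translate to $S=\emptyset$, $S=\{0,|X|+|Y|\}$, and $S=\{0,|X|,|Y|,|X|+|Y|\}$.

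The key observation I would make precise is how duality acts on $S$. If $\gamma$ is a $b$-edge consecutive magic labeling, so that $\gamma(E(G))=\{b+1,\ldots,b+|E(G)|\}$, then applying the formula $t\mapsto |V(G)|+|E(G)|+1-t$ sends the edge labels to $\{(|V(G)|-b)+1,\ldots,(|V(G)|-b)+|E(G)|\}$; hence the dual $\gamma'$ is a $(|V(G)|-b)$-edge consecutive magic labeling. Because $|V(G)|=|X|+|Y|$, duality interchanges the two outer values $0\leftrightarrow|X|+|Y|$ and the two inner values $|X|\leftrightarrow|Y|$. Thus $0\in S$ if and only if $|X|+|Y|\in S$, and $|X|\in S$ if and only if $|Y|\in S$, so $S$ is automatically a union of the pairs $\{0,|X|+|Y|\}$ and $\{|X|,|Y|\}$.

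It then remains only to rule out the combination $S=\{|X|,|Y|\}$, and this is exactly where Theorem~\ref{dual} does the work: if $|X|\in S$ (or, by the inner symmetry above, $|Y|\in S$, after swapping the roles of the two parts), then $G$ is a tree that is super edge-magic, so $|X|+|Y|\in S$, and duality then forces $0\in S$ as well. Consequently the inner pair can belong to $S$ only when the outer pair already does, which leaves precisely the three possibilities $S=\emptyset$, $S=\{0,|X|+|Y|\}$, and $S=\{0,|X|,|Y|,|X|+|Y|\}$. These are verbatim cases (i), (ii), and (iii) (the tree conclusion in (iii) again coming from Theorem~\ref{dual}), and they are pairwise disjoint and exhaustive, so exactly one holds.

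The argument is essentially bookkeeping once the shift $b\mapsto|V(G)|-b$ under duality is in hand; the one genuinely substantive input is Theorem~\ref{dual}, which is what collapses the a priori fourth option $S=\{|X|,|Y|\}$. I expect the only point requiring a word of care to be the degenerate coincidence $|X|=|Y|$: there the inner pair is a single value and the list in (iii) repeats it, but the same pairing argument applies unchanged, so the trichotomy is unaffected.
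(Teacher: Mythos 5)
Your proposal is correct and follows essentially the same route as the paper's proof: restrict $b$ via Theorem~\ref{thm:bipartite}, observe that the dual of a $b$-edge consecutive magic labeling is a $(|V(G)|-b)$-edge consecutive magic labeling (pairing $0\leftrightarrow|X|+|Y|$ and $|X|\leftrightarrow|Y|$), and use Theorem~\ref{dual} to upgrade the case $b=|X|$ or $|Y|$ to case (iii). Your write-up is somewhat cleaner---you compute the duality shift on edge labels once in general, where the paper verifies it case by case---but the mathematical content is identical.
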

\begin{proof}
We suppose that $G$ has a $b$-edge consecutive magic labeling $\lambda$ and denote  the dual of $\lambda$ by $\lambda'$.  By Theorem~\ref{thm:bipartite}, $b=0$, $|X|$, $|Y|$, or $|X|+|Y|$.
Since $|V(G)|=|X|+|Y|$, it is true that
$\lambda(X \cup Y)=\{1,\ldots,|X|+|Y|\}$ and $\lambda(E(G))=\{|X|+|Y|+1, \ldots, |X|+|Y|+|E(G)|\}$
if and only if $\lambda'(X \cup Y)=\{|E(G)|+1,\ldots,|X|+|Y|+|E(G)|\}$ and $\lambda'(E(G))=\{1, \ldots, |E(G)|\}$.
Similarly,
$\lambda(X)=\{1,\ldots,|X|\}$, $\lambda(E(G))=\{|X|+1, \ldots, |X|+|E(G)|\}$, and $\lambda(Y)=\{|X|+1+|E(G)|,\ldots,|X|+|Y|+E(G)\}$
if and only if $\lambda'(Y)=\{1,\ldots,|Y|\}$, $\lambda'(E(G))=\{|Y|+1, \ldots, |Y|+|E(G)|\}$, and $\lambda'(X)=\{|Y|+1+|E(G)|,\ldots,|X|+|Y|+|E(G)|\}$.
Thus, for $b=|X|+|Y|$ or $|X|$, then the dual of a $b$-edge consecutive magic labeling of $G$ is a $(|V(G)|-b)$-edge consecutive magic labeling of $G$ and vice versa.
Since $\lambda$ is the dual of $\lambda'$, this statement is true even for $b=0$ or $|Y|$.  Therefore, if $b=|X|$ or $|Y|$, then, by the observation by Wallis~\cite{Wal} together with Theorems~\ref{dual}, the statement (iii) is true. If neither (i) nor (iii) is true, then $b=0$ or $|X|+|Y|$. By the above argument again, the statement (ii) is immediately true.
\end{proof}
%\begin{Cor}
%$G$ has a $b$-edge consecutive magic labeling if and only if $G$ has a $(|X|+|Y|-b)$-edge consecutive magic labeling.
%\label{dual1}
%\end{Cor}
%\begin{proof}
%Let $\lambda$ be a $b$-edge consecutive magic labeling and let $\lambda'$ denote the dual of $\lambda$. By Theorem~\ref{thm:bipartite}, $b=0$, $|X|$, $|Y|$, or $|X|+|Y|$.
%Since $|V(G)|=|X|+|Y|$,
%$\lambda(X \cup Y)=\{1,\ldots,|X|+|Y|\}$ and $\lambda(E(G))=\{|X|+|Y|+1, \ldots, |X|+|Y|+|E(G)|\}$
%if and only if $\lambda'(X \cup Y)=\{|E(G)|+1,\ldots,|X|+|Y|+E(G)\}$ and $\lambda'(E(G))=\{|1, \ldots, |E(G)|\}$.
%Similarly,
%$\lambda(X)=\{1,\ldots,|X|\}$, $\lambda(E(G))=\{|X|+1, \ldots, |X|+|E(G)|\}$, and $\lambda(Y)=\{|X|+1+|E(G)|,\ldots,|X|+|Y|+E(G)\}$
%if and only if $\lambda(Y)=\{1,\ldots,|Y|\}$, $\lambda(E(G))=\{|Y|+1, \ldots, |Y|+|E(G)|\}$, and $\lambda(Y)=\{|Y|+1+|E(G)|,\ldots,|X|+|Y|+E(G)\}$.
%Thus the dual of a $b$-edge consecutive magic labeling of $G$ is a $(|V(G)|-b)$-edge consecutive magic labeling of $G$ and vice versa. Hence the statement is immediately true as stated above.
%\end{proof}

Given a $b$-edge consecutive magic labeling $\lambda$ of a connected bipartite graph, there is a way of deducing a new  $b$-edge consecutive magic labeling and a magic constant from  $\lambda$ other than using  the dual of edge magic labeling.
\begin{Prop}
If $\lambda$ is a $b$-edge consecutive magic labeling of a connected bipartite graph $G=(X,Y)$ with a magic constant $k$, then the  mapping $\lambda^*:V(G) \cup E(G) \to \{1,\ldots,|V(G)|+|E(G)|\}$ defined by
\begin{itemize}
\item[(i)] $\lambda^*(x)=|V(G)|+2|E(G)|+1-\lambda(x)$ for a vertex $x$ and $\lambda^*(xy)=|E(G)|+1-\lambda(xy)$ for two adjacent vertex $x$ and $y$ is also a $b$-edge consecutive magic labeling of $G$ with magic constant $2|V(G)|+5|E(G)|+3-k$ if $b=0$;
\item[(ii)] $\lambda^*(x)=|X|+|Y|+1-\lambda(x)$ for a vertex $x$ and $\lambda^*(xy)=2|V(G)|+|E(G)|+1-\lambda(xy)$ for two adjacent vertex $x$ and $y$ is also a $b$-edge consecutive magic labeling of $G$  with magic constant $4|V(G)|+|E(G)|+3-k$ if $b=|V(G)|$;
\item[(iii)] $\lambda^*(x)=|X|+1-\lambda(x)$ for a vertex $x \in X$, $\lambda^*(y)=2|X|+|Y|+2|E(G)|+1-\lambda(y)$ for a vertex $y \in Y$,  and $\lambda^*(xy)=2|X|+|E(G)|+1-\lambda(xy)$ for two adjacent vertex $x$ and $y$ is also a $b$-edge consecutive magic labeling of $G$ with magic constant $5|X|+|Y|+3|E(G)|+3-k$ if $b=|X|$;
\item[(iv)] $\lambda^*(x)=|X|+2|Y|+2|E(G)|+1-\lambda(x)$ for a vertex $x \in X$, $\lambda^*(y)=|Y|+1-\lambda(y)$ for a vertex $y \in Y$,  and $\lambda^*(xy)=2|Y|+|E(G)|+1-\lambda(xy)$ for two adjacent vertex $x$ and $y$ is also a $b$-edge consecutive magic labeling of $G$ with magic constant $|X|+5|Y|+3|E(G)|+3-k$ if $b=|Y|$.
\end{itemize}

\label{lem:another}
\end{Prop}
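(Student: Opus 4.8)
The plan is to treat all four cases by a single unifying idea: in each case the map $\lambda^*$ is nothing but the \emph{block-wise reflection} of $\lambda$, where the ``blocks'' are the maximal runs of consecutive integers that $\lambda$ occupies on $X$, on $Y$, and on $E(G)$. First I would record, for each admissible value of $b$ coming from Theorem~\ref{thm:bipartite}, the precise block structure of $\lambda$. For $b=0$ the edges receive $\{1,\ldots,|E(G)|\}$ and the vertices receive $\{|E(G)|+1,\ldots,|E(G)|+|V(G)|\}$; for $b=|V(G)|$ these two blocks are interchanged; and for $b=|X|$ the proof of Theorem~\ref{dual} already supplies $\lambda(X)=\{1,\ldots,|X|\}$, $\lambda(E(G))=\{|X|+1,\ldots,|X|+|E(G)|\}$, and $\lambda(Y)=\{|X|+|E(G)|+1,\ldots,|X|+|Y|+|E(G)|\}$, with the symmetric statement for $b=|Y|$.

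The key observation is then elementary: if a label set is an interval $\{a,a+1,\ldots,c\}$, the reflection $t\mapsto (a+c)-t$ is an involution of that interval onto itself. I would simply verify that in each of (i)--(iv) the three affine maps defining $\lambda^*$ on $X$, on $Y$, and on $E(G)$ are exactly the reflections of the corresponding blocks. For instance, in (i) the vertex block $\{|E(G)|+1,\ldots,|E(G)|+|V(G)|\}$ has endpoint sum $a+c=|V(G)|+2|E(G)|+1$, which is precisely the constant in $\lambda^*(x)=|V(G)|+2|E(G)|+1-\lambda(x)$, while the edge block $\{1,\ldots,|E(G)|\}$ has $a+c=|E(G)|+1$, matching $\lambda^*(xy)=|E(G)|+1-\lambda(xy)$; the three remaining cases check identically.

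Because each reflection is a bijection of its block onto itself and the blocks partition $\{1,\ldots,|V(G)|+|E(G)|\}$, the composite $\lambda^*$ is a bijection onto $\{1,\ldots,|V(G)|+|E(G)|\}$ that leaves the edge block invariant. Hence $\lambda^*$ is again a labeling whose edge labels are $\{b+1,\ldots,b+|E(G)|\}$, i.e.\ a $b$-edge consecutive labeling. For the magic property, note that on each edge $xy$ with $x\in X$ and $y\in Y$ each of $\lambda^*(x)$, $\lambda^*(y)$, $\lambda^*(xy)$ has the form $C-\lambda(\cdot)$ for a constant independent of the edge; calling these constants $C_X$, $C_Y$, $C_E$, we get
\[
\lambda^*(x)+\lambda^*(y)+\lambda^*(xy) = (C_X+C_Y+C_E) - \big(\lambda(x)+\lambda(y)+\lambda(xy)\big) = (C_X+C_Y+C_E) - k,
\]
which is constant since $\lambda$ is edge-magic with constant $k$. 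Adding the three block-reflection constants then reproduces exactly the magic constants claimed in (i)--(iv).

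The computation is completely routine; the only point that genuinely needs care is pinning down the block structure in (iii) and (iv), in particular resolving which part carries the small labels when $|X|=|Y|$ (in which case a $|X|$-labeling may instead satisfy $\lambda(Y)=\{1,\ldots,|Y|\}$ and is then handled by the formula of case (iv)). This is precisely where the argument of Theorem~\ref{dual} is invoked; once the orientation is fixed, the remaining verification is pure arithmetic and presents no further obstacle.
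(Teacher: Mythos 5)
Your proof is correct, and it is in fact more complete than the paper's own. The paper's proof is a bare arithmetic check: for $b=0$ it expands $\lambda^*(x)+\lambda^*(y)+\lambda^*(xy)$, observes that the sum equals $2|V(G)|+5|E(G)|+3-k$, and declares the remaining three cases similar. It never verifies that $\lambda^*$ is a bijection onto $\{1,\ldots,|V(G)|+|E(G)|\}$ whose edge labels form the block $\{b+1,\ldots,b+|E(G)|\}$ --- which is exactly what makes $\lambda^*$ a $b$-edge consecutive magic labeling rather than merely a map with constant edge sums. Your block-reflection viewpoint supplies precisely this missing piece: identifying each affine constant as the endpoint sum $a+c$ of the interval occupied by $\lambda$ on $X$, on $Y$, and on $E(G)$ shows at once that $\lambda^*$ permutes each block, hence is a bijection preserving the edge block, and your magic-sum computation $(C_X+C_Y+C_E)-k$ then reduces to the same calculation the paper performs. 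Your caution about the degenerate orientation when $|X|=|Y|$ (where an $|X|$-edge consecutive labeling may assign the small labels to $Y$, so that the formulas of case (iv) apply rather than those of (iii)) is also warranted: the paper glosses over this, and formula (iii) taken literally would produce out-of-range labels in that situation. In short, the underlying computation is the same, but your framework proves the full statement where the paper checks only the magic constant.
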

\begin{proof} Suppose that $b=0$. Then
\begin{align*}
& \lambda^*(x)+\lambda^*(y)+\lambda^*(xy) \\  & \ \ =(|V(G)|+2|E(G)|+1-\lambda(x))+(|V(G)|+2|E(G)|+1-\lambda(y))+(|E(G)|+1-\lambda(xy))
\\ & \ \ =2|V(G)|+5|E(G)|+3-k,
\end{align*}
which is a constant number. For the remaining cases, it can similarly be checked.
\end{proof}

\section{Edge consecutive magic labelings for trees}
In the previous section, we have shown that if a connected bipartite graph $G$ has a $b$-consecutive labeling for some $b \in \{1,\ldots,|V(G)|-1\}$, then $G$ is a tree.
In this section, we study edge consecutive magic labelings of interesting families of trees.

A {\em caterpillar} is a tree derived from a path by joining leaves to the vertices of the path. We denote by $S_{n_1,n_2,\ldots,n_r}$ the caterpillar derived from a path $P_r=c_1c_2\cdots c_r$ for a positive integer $r$ by joining $n_i$ leaves to $c_i$, where $n_i$ is a nonnegative integer, for each $i=1$, $\ldots$, $r$. We denote the neighbors of $c_i$ by $c_{i,1}$, $\ldots$, $c_{i,n_i}$ for $i=1$, $\ldots$, $r$.
\begin{Thm}
The caterpillar $S_{n_1,n_2,\ldots,n_r}$ has a $b$-edge consecutive magic labeling if and only if \[t \in \{0,\sum_{i=1}^{\left\lceil\frac{r}{2}\right\rceil}n_{2i-1}+\left\lfloor\frac{r}{2}\right\rfloor,\sum_{i=1}^{\left\lfloor\frac{r}{2}\right\rfloor}n_{2i}+\left\lceil\frac{r}{2}\right\rceil,\sum_{i=1}^r n_i+r\}.\]

%\{0,\sum_{i=1}^{\frac{r}{2}}n_{2i-1}+\frac{r}{2},\sum_{i=1}^{\frac{r}{2}}n_{2i}+\frac{r}{2},\sum_{i=1}^r n_i+r\}\] if $r$ is even; \[t \in \{0,\sum_{i=1}^{\frac{r+1}{2}}n_{2i-1}+\frac{r-1}{2},\sum_{i=1}^{\frac{r-1}{2}}n_{2i}+\frac{r+1}{2},\sum_{i=1}^r n_i+r\}\] if $r$ is odd.
\label{thm:ctplr}
\end{Thm}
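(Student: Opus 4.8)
The plan is to identify the bipartition explicitly and then reduce everything to the structural theorems already proved. Writing the two partite sets as $X = \{c_i : i \text{ odd}\} \cup \{c_{i,j} : i \text{ even}\}$ and $Y = \{c_i : i \text{ even}\} \cup \{c_{i,j} : i \text{ odd}\}$ (each leaf lies opposite its spine vertex), a direct count gives $|X| = \lceil r/2\rceil + \sum_{i=1}^{\lfloor r/2\rfloor} n_{2i}$ and $|Y| = \lfloor r/2\rfloor + \sum_{i=1}^{\lceil r/2\rceil} n_{2i-1}$, while $|V(G)| = r + \sum_{i=1}^r n_i$. Thus the four quantities in the statement are precisely $0$, $|Y|$, $|X|$, and $|X|+|Y| = |V(G)|$. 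Since a caterpillar is a connected bipartite graph, Theorem~\ref{thm:bipartite} at once yields the ``only if'' direction: every admissible $b$ belongs to $\{0, |X|, |Y|, |X|+|Y|\}$.

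For the ``if'' direction I would invoke Theorem~\ref{eachcase}. A caterpillar is a tree, so it falls into case (i), (ii) or (iii) of that trichotomy, and these are mutually exclusive. To force case (iii)---which is exactly the assertion that a $b$-edge consecutive magic labeling exists for all four values simultaneously---it is enough to produce one labeling with $b = |X|$ (say). I would obtain it by reversing the correspondence used in the proof of Theorem~\ref{dual}. Suppose $\varphi$ is a graceful labeling of the caterpillar with the $\alpha$-property $\varphi(X) = \{0, \dots, |X|-1\}$, $\varphi(Y) = \{|X|, \dots, |X|+|Y|-1\}$, so that the edge differences $\varphi(y) - \varphi(x)$ run over $\{1, \dots, |E(G)|\}$. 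Define $\lambda(x) = \varphi(x) + 1$ for $x \in X$, $\lambda(y) = |E(G)| + 2|X| + |Y| - \varphi(y)$ for $y \in Y$, and $\lambda(xy) = |X| + \varphi(y) - \varphi(x)$ for each edge. A one-line check shows $\lambda(x) + \lambda(y) + \lambda(xy) = |E(G)| + 3|X| + |Y| + 1$ is constant; moreover $\lambda(X) = \{1,\dots,|X|\}$, the edge labels fill $\{|X|+1, \dots, |X|+|E(G)|\}$ bijectively because $\varphi$ is graceful, and $\lambda(Y) = \{|X|+|E(G)|+1, \dots, |V(G)|+|E(G)|\}$. Hence $\lambda$ is an $|X|$-edge consecutive magic labeling, and Theorem~\ref{eachcase} then delivers labelings for $b = 0, |Y|$ and $|X|+|Y|$ as well.

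The substance of the argument, and the step I expect to be the main obstacle, is the construction of the $\alpha$-labeling $\varphi$ that feeds the reduction above. I would build it by sweeping along the spine $c_1 c_2 \cdots c_r$, allotting the small labels $0, 1, \dots, |X|-1$ to the $X$-vertices and the large labels $|X|, \dots, |X|+|Y|-1$ to the $Y$-vertices in a coordinated order---advancing from the bottom of the $X$-range and the top of the $Y$-range in tandem---so that the $|E(G)|$ edge differences take each value in $\{1, \dots, |E(G)|\}$ exactly once. The delicate part is the bookkeeping: at each block consisting of $c_i$ together with its pendant leaves $c_{i,1}, \dots, c_{i,n_i}$, the leaf labels and the spine label must be consumed in the right order, and odd-indexed and even-indexed spine vertices play asymmetric roles. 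This asymmetry is exactly what forces the $\lceil r/2\rceil$ and $\lfloor r/2\rfloor$ terms in the formula, so I would handle the cases $r$ even and $r$ odd separately, giving closed-form expressions for $\varphi(c_i)$ and $\varphi(c_{i,j})$ and then verifying directly that the resulting difference multiset is $\{1, 2, \dots, |E(G)|\}$.
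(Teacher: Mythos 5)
Your reduction framework is sound: the ``only if'' direction via Theorem~\ref{thm:bipartite} is exactly right, the conversion you write down from an $\alpha$-labeling $\varphi$ (graceful with $\varphi(X)=\{0,\ldots,|X|-1\}$, $\varphi(Y)=\{|X|,\ldots,|X|+|Y|-1\}$) to an $|X|$-edge consecutive magic labeling $\lambda$ is arithmetically correct --- it is precisely the inverse of the map used in the paper's proof of Theorem~\ref{dual} --- and a single labeling with $b=|X|$ does force case (iii) of Theorem~\ref{eachcase}, which yields all four values of $b$. The paper itself uses the same endgame (construct one labeling, then invoke Theorem~\ref{eachcase}), except that it builds a $|Y|$-edge consecutive magic labeling directly rather than routing through gracefulness.

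The genuine gap is that the one step carrying all the content --- the existence of the $\alpha$-labeling of the caterpillar --- is never actually established. You describe a sweep along the spine, say the bookkeeping is ``delicate,'' and promise closed-form expressions for $\varphi(c_i)$ and $\varphi(c_{i,j})$ together with a verification that the differences exhaust $\{1,\ldots,|E(G)|\}$, but none of this is carried out; by your own account this is ``the substance of the argument.'' As written, the existence half of the theorem rests on an unproved assertion. (The assertion is true --- caterpillars are classically known to admit $\alpha$-labelings, going back to Rosa --- but you neither cite such a result nor prove it.) Compare with the paper, which does exactly this work explicitly: it defines $\lambda(c_{2i-1})$, $\lambda(c_{2i})$, $\lambda(c_{2i-1,j})$, $\lambda(c_{2i,j})$ and the edge labels by closed formulas and checks that every edge sum equals $2\alpha+4\beta$, where $\alpha=|X|$ and $\beta=|Y|$. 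Until you supply the analogous formulas and verification for $\varphi$ (handling $r$ even and odd, and empty blocks $n_i=0$), the proof is a correct reduction with its kernel missing.
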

\begin{proof}
The caterpillar $S_{n_1,n_2,\ldots,n_r}$ is a connected bipartite graph. Let $(X,Y)$ be a bipartition of $S_{n_1,n_2,\ldots,n_r}$ such that
\begin{align*}
X&=\{c_1,c_{21},\ldots,c_{2n_2},c_3, \ldots, c_{r1},\ldots,c_{rn_r}\}; \\
Y&=\{c_{11}, \ldots, c_{1n_1},c_2,c_{31},\ldots,c_{3n_3} \ldots, c_{r-1,1},\ldots,c_{r-1,n_{r-1}},c_r\}
\end{align*}
 if $r$ is even, and
\begin{align*} X&=\{c_1,c_{21},\ldots,c_{2n_2},c_3, \ldots, c_{r-1,1},\ldots,c_{r-1,n_{r-1}},c_r\}; \\
Y&=\{c_{11}, \ldots, c_{1n_1},c_2,c_{31},\ldots,c_{3n_3} \ldots, c_{r1},\ldots,c_{rn_r}\}
\end{align*}
if $r$ is odd. Then $|X|=\sum_{i=1}^{\lfloor\frac{r}{2}\rfloor}n_{2i}+\lceil \frac{r}{2} \rceil$ and $|Y|=\sum_{i=1}^{\lceil\frac{r}{2}\rceil}n_{2i-1}+\lfloor\frac{r}{2} \rfloor$.

Now the `only if' part immediately follows from Theorem~\ref{thm:bipartite}.

For a notational convenience, we let  $\alpha=|X|=\sum_{i=1}^{\lfloor\frac{r}{2}\rfloor}n_{2i}+\lceil \frac{r}{2} \rceil$ and $\beta=|Y|=\sum_{i=1}^{\lceil\frac{r}{2}\rceil}n_{2i-1}+\lfloor\frac{r}{2} \rfloor$.
To show the `if' part, we give a a $\beta$-edge consecutive magic labeling first.
For each edge of $S_{n_1,n_2,\ldots,n_r}$, the labels of its end vertices are  $c_i$ and $c_{i+1}$ for some $i \in \{1, \ldots, r-1\}$, or  $c_{2i-1}$ and $c_{2i-1,j}$ for some $i \in \{1, \ldots, \lceil r/2 \rceil\}$ and $j \in \{1,\ldots, n_{2i-1}\}$, or $c_{2i}$ and $c_{2i,j}$ for some $i \in \{1, \ldots, \lceil r/2 \rceil\}$ and $j \in \{1,\ldots, n_{2i}\}$.

We define $\lambda:V(S_{n_1,n_2,\ldots,n_r})\cup E(S_{n_1,n_2,\ldots,n_r})\to\{1,2,\cdots,2\sum_{i=1}^rn_i + 2r-1\}$ by
%$\lambda(N(c_1))=\{1,\ldots,n_1\}$, $\lambda(c_2)=n_1+1$, $\lambda(N(c_3))=\{n_1+2,\ldots, n_1+n_3+1\}$,
%$\lambda(c_1)=\beta+1$, $\lambda(N(c_2))=\{\beta+2,\ldots, \beta+n_2+1\}$,
%$\lambda(c_3)=\beta+n_2+2$,
\[\lambda(c_{2i-1})  =\alpha+2\beta-1+\sum_{l=1}^{i-1} n_{2l}+i;  \quad
\lambda(c_{2i})  =\sum_{l=1}^{i} n_{2l-1}+i\]   \[ \lambda(c_{2i-1,j})=\sum_{l=1}^{i-1} n_{2l-1}+i+j-1; \quad \lambda(c_{2i,j})=\alpha+2\beta-1+\sum_{l=1}^{i-1} n_{2l}+i+j\]
\[\lambda(c_{i}c_{i+1})  =\alpha+2\beta-i-\sum_{l=1}^{i}n_l \mbox{ for any } 1 \le i \le r-1;\]
\[\lambda(c_{2i-1}c_{2i-1,j})  =\alpha+2\beta-2i+2-\sum_{l=1}^{2i-2}n_l-j;\]
\[\lambda(c_{2i}c_{2i,j}) =\alpha+2\beta-2i+1-\sum_{l=1}^{2i-1}n_l-j\]
for $j=1$, $\ldots$, $n_{2i}$ and $i=1$, $\ldots$, $\left\lceil r/2 \right\rceil$.   Then
\begin{align*} \lambda(c_{2i-1})+\lambda(c_{2i})+\lambda(c_{2i-1}c_{2i})&=\left(\alpha+2\beta-1+\sum_{l=1}^{i-1} n_{2l}+i\right)+\left(\sum_{l=1}^{i} n_{2l-1}+i\right)\\ & \ \ \ \ +\left(\alpha+2\beta-2i+1-\sum_{l=1}^{2i-1}n_l\right) =2\alpha+4\beta.
\end{align*} Similarly, one can check that
\[\lambda(c_{2i-1})+\lambda(c_{2i-1,j})+\lambda(c_{2i-1}c_{2i-1,j})=\lambda(c_{2i})+\lambda(c_{2i,j})+\lambda(c_{2i}c_{2i,j})=2\alpha+4\beta.\]
 Thus $\lambda$ is a $\beta$-edge consecutive magic labeling. Hence the statement is true by Theorem~\ref{eachcase}.
\end{proof}

Since the double star $S_{m,n}$ is a special case of the caterpillar $S_{n_1,n_2,\ldots,n_r}$ for $r=2$, the following corollary is immediately true.
\begin{Cor}
The double star $S_{m,n}$ has a $b$-edge consecutive magic labeling if and only if $b \in \{0,m+1,n+1, m+n+2\}$.
\label{cor:doublestar}
\end{Cor}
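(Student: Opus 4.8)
The plan is to obtain the corollary by directly specializing Theorem~\ref{thm:ctplr} to the case $r=2$, since the double star $S_{m,n}$ is exactly the caterpillar $S_{n_1,n_2}$ with $n_1=m$ and $n_2=n$. Consequently no new labeling needs to be constructed and no further appeal to Theorem~\ref{eachcase} is required; the whole task reduces to evaluating the set of admissible values of $b$ furnished by Theorem~\ref{thm:ctplr} at $r=2$.

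First I would substitute $r=2$ into the ceiling and floor quantities occurring in the formula, observing that $\lceil r/2\rceil=\lfloor r/2\rfloor=1$. This collapses each of the three nonzero entries into a one-term sum, which is the only place where a little care with the index ranges is warranted. Next I would evaluate the four entries in turn: the first is $0$; the second, $\sum_{i=1}^{\lceil r/2\rceil}n_{2i-1}+\lfloor r/2\rfloor$, becomes $n_1+1=m+1$; the third, $\sum_{i=1}^{\lfloor r/2\rfloor}n_{2i}+\lceil r/2\rceil$, becomes $n_2+1=n+1$; and the fourth, $\sum_{i=1}^{r}n_i+r$, becomes $n_1+n_2+2=m+n+2$. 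Assembling these yields precisely $\{0,m+1,n+1,m+n+2\}$, the claimed set.

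There is essentially no real obstacle here, since all of the substance already resides in Theorem~\ref{thm:ctplr} and the corollary is a bookkeeping step. The only thing to watch is that the summation limits and the values of $\lceil\cdot\rceil$ and $\lfloor\cdot\rfloor$ are read off correctly for the small parameter $r=2$, so that, for example, the entry $m+1$ comes from the odd index $n_1$ and the entry $n+1$ from the even index $n_2$, and not the reverse.
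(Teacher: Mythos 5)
Your proposal is correct and is exactly the paper's argument: the paper derives the corollary by noting that $S_{m,n}$ is the caterpillar $S_{n_1,n_2}$ with $r=2$, $n_1=m$, $n_2=n$, and reading off the set $\{0,m+1,n+1,m+n+2\}$ from Theorem~\ref{thm:ctplr}. Your explicit evaluation of the floor/ceiling sums at $r=2$ is just the bookkeeping the paper leaves implicit.
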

\noindent Wallis~\cite{Wal} asked whether or not double stars are edge-magic. Since a consecutive edge magic labeling is an edge-magic labeling, the above proposition answers his question.

By Corollary~\ref{cor:doublestar}, the double star $S_{m,n}$ has an $(m+1)$-edge consecutive magic labeling and an $(n+1)$-edge consecutive magic labeling. We show that there are only two such labelings for each of $m$, $n$.
\begin{Prop}
For some positive integers $m$ and $n$, the double star $S_{m,n}$ has only two $(m+1)$-edge consecutive magic labelings (resp.\ $(n+1)$-edge consecutive magic labelings) both of which have magic constant $ 4m+2n+6$ (resp.\ $4n+2m+6$).
\label{thm:doublestarmagicnumber}
\end{Prop}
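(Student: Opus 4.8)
The plan is to fix convenient notation for the double star, invoke the structural results already proved (Theorem~\ref{thm:bipartite} and Proposition~\ref{prop:neighbor}) to decide which vertices carry ``small'' and which carry ``large'' labels, and then use the edge-magic equation around each of the two central vertices to force the labeling up to the obvious leaf symmetry. Write $S_{m,n}$ with central vertices $u,v$ joined by $uv$, where $u$ carries the leaves $a_1,\dots,a_m$ and $v$ the leaves $b_1,\dots,b_n$. As a bipartite graph $S_{m,n}=(X,Y)$ with $X=\{u,b_1,\dots,b_n\}$ and $Y=\{v,a_1,\dots,a_m\}$, so $|X|=n+1$, $|Y|=m+1$, and the value $b=m+1$ is exactly $|Y|$. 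Since $|V(S_{m,n})|=m+n+2$ and $|E(S_{m,n})|=m+n+1$, the edge labels of an $(m+1)$-edge consecutive magic labeling $\lambda$ form the block $\{m+2,\dots,2m+n+2\}$, while the vertex labels split as $L\cup H$ with $L=\{1,\dots,m+1\}$ and $H=\{2m+n+3,\dots,2m+2n+3\}$. By Proposition~\ref{prop:neighbor} (equivalently the $|Y|$-analogue of Theorem~\ref{dual}), one part receives $L$ and the other $H$; taking $\lambda(Y)=L$ and $\lambda(X)=H$ (the other choice, available only when $m=n$, is related by the automorphism exchanging the two stars), we get that $v$ and the $a_i$ receive the labels in $L$ and $u$ and the $b_j$ those in $H$.

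The key step is the constant-sum argument at each centre. Writing $k$ for the magic constant, every neighbor $w$ of $u$ satisfies $\lambda(w)+\lambda(uw)=k-\lambda(u)$; since $u$ is adjacent to all of $Y$ and $\lambda(Y)=L$ ranges over every value of $\{1,\dots,m+1\}$, the edge labels at $u$ are $m+1$ consecutive integers. The same reasoning at $v$, whose neighbors are exactly $X$ with $\lambda(X)=H$, forces the edges at $v$ to be $n+1$ consecutive integers. These two blocks share only the label of the central edge $uv$, and their union is the full consecutive range $\{m+2,\dots,2m+n+2\}$; hence one is an initial segment and the other a final segment meeting in a single value. This leaves precisely two configurations: either the edges at $u$ fill $\{m+2,\dots,2m+2\}$ and those at $v$ fill $\{2m+2,\dots,2m+n+2\}$ (so $\lambda(uv)=2m+2$), or the edges at $v$ fill $\{m+2,\dots,m+n+2\}$ and those at $u$ fill $\{m+n+2,\dots,2m+n+2\}$ (so $\lambda(uv)=m+n+2$).

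To finish, in each configuration I would read off $\lambda(u)$, $\lambda(v)$, $\lambda(uv)$ from the extreme labels of the two consecutive blocks together with the relations $k-\lambda(u)$ and $k-\lambda(v)$: the first case yields $\lambda(v)=1$, $\lambda(u)=2m+2n+3$, and the second yields $\lambda(v)=m+1$, $\lambda(u)=2m+n+3$, and substituting back gives $k=4m+2n+6$ in \emph{both} cases. The remaining vertex-to-label and edge-to-label assignments are then determined up to permuting leaves within each star, so the two configurations are exactly the two asserted labelings; the $(n+1)$-edge consecutive statement follows verbatim after interchanging $X$ and $Y$ (equivalently $m$ and $n$), giving constant $4n+2m+6$. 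The main obstacle is the combinatorial heart of the second paragraph: proving that the edge labels at $u$ and at $v$ must each be blocks of consecutive integers overlapping in exactly one value, and that only the two listed arrangements occur; once this is established the magic constant drops out of a short computation. The only delicate bookkeeping I anticipate is tracking which extreme label is forced onto $u$ versus $v$, and checking the degenerate case $m=n$, where the two labelings still differ through $\lambda(u),\lambda(v)$ even though $\lambda(uv)$ coincides.
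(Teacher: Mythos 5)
Your proposal is correct and takes essentially the same route as the paper: the paper works with the vertex-sum sets $A=\{\lambda(u)+\lambda(y)\mid y\in Y\}$ and $B=\{\lambda(v)+\lambda(x)\mid x\in X\}$, which are precisely your edge-label blocks at the two centres reflected through the map $t\mapsto k-t$, so the two arguments are the same up to this trivial change of viewpoint. In both, the crux is that the two consecutive blocks (one per centre) share exactly one element, namely the contribution of the central edge, and hence must meet end-to-end inside the full consecutive range, yielding exactly two configurations whose extreme values force $\lambda(u)$, $\lambda(v)$ and the constant $4m+2n+6$.
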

\begin{proof} We may regard $G:=S_{m,n}$ as a bipartite graph with bipartition $(X,Y)$ with $|X|=m+1$ and $|Y|=n+1$.
%In the proof of Theorem~\ref{thm:ctplr}, we have shown that $2(m+1)+4(n+1)=4n+2m+6$ is a magic constant for $b=n+1$. By the symmetry, $4(n+1)+2(m+1)=4m+2n+6$ is a magic constant for $b=m+1$. To show the uniqueness parts,
Let $\lambda$ be an $(m+1)$-edge consecutive magic labeling of $G$ and $k$ be the magic constant of $\lambda$. Then, by the definition of $(m+1)$-labeling and Proposition~\ref{prop:neighbor},
\begin{equation}
\lambda(X) =[m+1] \quad \mbox{ and } \quad \lambda(Y)=\{2m+n+3,\ldots,2m+2n+3\}
\label{eq:prop1}
\end{equation}
 for all $x \in X$ and $y \in Y$.
Let $u$ and $v$ be the central vertices of  $G$ and let $\lambda(u)=\alpha$ and $\lambda(v)=\beta$. Without loss of generality, we may assume that $u \in X$ and $v \in Y$.  As $\alpha$ and $\beta$ belong to different partite sets and the vertices other than the central vertices form an independent set, we know from (\ref{eq:prop1}) that the labeling $\lambda$ is completely determined by $\alpha$ and $\beta$. Therefore it suffices to show that there are only two possible pairs of integers for $(\alpha,\beta)$.
By (\ref{eq:prop1}) and the assumption that $u \in X$ and $v \in Y$,
\[A:=\{\lambda(u)+\lambda(y) \mid y \in Y \}=\{\alpha+2m+n+3, \cdots, \alpha+2m+2n+3\}\]
and\[
B: = \{\lambda(v)+\lambda(x) \mid x \in X \} = \{\beta+1, \cdots, \beta+m+1\}.\]
Since $X \cap Y =\emptyset$, it is true that $A \cap B=\{\lambda(u)+\lambda(v)\}$, so $|A \cap B|=1$. Moreover, $X \cup Y=V(G)$ and each edge is incident to $u$ or $v$,  so $A \cup B=\{ \lambda(x) + \lambda(y) \mid xy \in E(G)\}$.

Since $\lambda$ is an edge consecutive magic labeling, $\{k-\lambda(xy) \mid xy \in E(G)\}$ is a set of $m+n+1$ consecutive integers and therefore $\{ \lambda(x) + \lambda(y) \mid xy \in E(G)\}$ is a set of $m+n+1$ consecutive integers. Since $A \cup B=\{ \lambda(x) + \lambda(y) \mid xy \in E(G)\}$, $A \cup B$ is a set of $m+n+1$ consecutive integers. This together with the fact $|A \cap B|=1$ imply that there are only two possible cases:
\[\alpha+2m+n+3 = \beta+m+1 \quad \mbox{or} \quad \alpha+2m+2n+3 = \beta+1.\]
 Assume the former. Since $2m+n+3 \le \beta$ and $\alpha \le m+1$, \[
(2m+n+3)+m+1 \le \beta+m+1 =\alpha+2m+n+3 \le (m+1)+2m+n+3.\]
Since the left hand side of the first inequality and the right hand side of the second inequality both equal $3m+n+4$, we have  $\beta+m+1=3m+n+4$ and $\alpha+2m+n+3=3m+n+4$. Hence $\alpha=m+1$ and $\beta=2m+n+3$.  Now assume the latter. Since $\beta \le 2m+2n+3$ and $1 \le \alpha$, \[
1+2m+2n+3 \le \alpha +2m+2n+3 = \beta+1 \le 2m+2n+3+1.\]
Thus $\beta=2m+2n+3$ and $\alpha=1$. We can easily check that the magic constant is $4m+2n+6$ in both cases.

By symmetry, the double star $S_{m,n}$ has only two $(n+1)$-edge consecutive magic labelings and their magic constant is $2m+4n+6$. \end{proof}

Proposition~\ref{thm:doublestarmagicnumber} tells us that  magic constants of $(m+1)$-edge consecutive magic labelings and $(n+1)$-edge consecutive magic labelings for a double star are unique. As a matter of fact, the magic constants of a double star are of specific form.
\begin{Thm}
The magic constants of the double star $S_{m,n}$ are in the form of $dt+6$ for some nonnegative integer $t$ where $d$ is the greatest common divisor of $m$ and $n$.
\label{thm:magicconstant}
\end{Thm}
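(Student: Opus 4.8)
The plan is to show that every magic constant $k$ of $S_{m,n}$ satisfies $k \equiv 6 \pmod{d}$ together with $k \ge 6$, which immediately yields $k = dt+6$ with $t=(k-6)/d$ a nonnegative integer. The key realization is that we only need a \emph{necessary} condition on $k$: the statement asserts the form of the magic constants, not which values are realized, so there is no need to exhibit labelings. A single congruence extracted from the edge-magic equations will therefore suffice.

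The main tool is the classical device of summing the defining equations over all edges. Let $u$ and $v$ be the two central vertices with $\deg(u)=m+1$ and $\deg(v)=n+1$, and let $\lambda$ be any edge-magic total labeling of $S_{m,n}$ with magic constant $k$. Summing $\lambda(x)+\lambda(y)+\lambda(xy)=k$ over the $m+n+1$ edges, the left-hand side becomes $\sum_{w}\deg(w)\lambda(w)+\sum_{e}\lambda(e)$, since each vertex is counted once for every incident edge. As every leaf has degree $1$, the degree-weighted vertex sum equals $S_V+m\lambda(u)+n\lambda(v)$, where $S_V$ denotes the sum of all vertex labels. Adding the edge-label sum $S_E$ and using $S_V+S_E = 1+2+\cdots+(2m+2n+3)=(2m+2n+3)(m+n+2)$, I obtain the identity
\[(m+n+1)\,k=(2m+2n+3)(m+n+2)+m\lambda(u)+n\lambda(v).\]

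Reducing this identity modulo $d=\gcd(m,n)$ is the heart of the argument. Since $d$ divides both $m$ and $n$, the terms $m\lambda(u)$ and $n\lambda(v)$ vanish, while $m+n+1\equiv 1$, $2m+2n+3\equiv 3$, and $m+n+2\equiv 2$, so the identity collapses to $k\equiv 6\pmod{d}$. For the lower bound, note that $k$ is the sum of the three distinct labels on any single edge, and three distinct positive integers sum to at least $1+2+3=6$, whence $k\ge 6$. Therefore $t:=(k-6)/d$ is a nonnegative integer and $k=dt+6$, as claimed.

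The only delicate point is keeping the degree-weighted bookkeeping correct in the summation step, in particular isolating the surplus contributions $m\lambda(u)$ and $n\lambda(v)$ of the two high-degree vertices; once the linear identity is in hand the congruence mod $d$ and the bound $k\ge 6$ are both immediate. I do not expect any issue with realizability to arise, since the theorem constrains only the \emph{form} of the magic constants and the summation identity holds for every edge-magic total labeling, not merely the consecutive ones.
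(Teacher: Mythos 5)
Your proof is correct, and while it rests on the same backbone as the paper's, it finishes differently. The identity you derive by summing the magic condition over all edges, $(m+n+1)k=(2m+2n+3)(m+n+2)+m\lambda(u)+n\lambda(v)$, is exactly the paper's starting identity (written there as $k(m+n+1)=(m+n+2)(2m+2n+3)+mi+nj$), and your degree-weighted bookkeeping is accurate. The divergence is in the number theory: the paper introduces an auxiliary integer $l$ via $mi+nj+1=l(m+n+1)$, argues $l\ge 2$, and then splits into cases, invoking B\'ezout's identity when $d=1$ and the division algorithm (showing the remainder of $l$ modulo $d$ must be $1$) when $d\ge 2$. You instead reduce the identity modulo $d$ in a single step to get $k\equiv 6\pmod{d}$, and pair it with the elementary bound $k\ge 6$ coming from three distinct positive labels on one edge. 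Your route buys three things: it removes the case split and the auxiliary integer entirely; it makes the nonnegativity of $t=(k-6)/d$ explicit, a point the paper glosses over; and it makes transparent that the conclusion holds for \emph{every} edge-magic total labeling of $S_{m,n}$, not only consecutive ones --- which is in fact what the theorem asserts, given the paper's definition of a magic constant of $G$ via magic valuations (the paper's proof nominally assumes a $b$-edge consecutive labeling but never uses consecutiveness).
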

\begin{proof}
Suppose that the double star $G:=S_{m,n}$ has a $b$-edge consecutive magic labeling $\lambda$ and $k$ is a magic constant of $\lambda$. Let $x$ and $y$  be  the central vertices of the double star $G$ and $\lambda(x)=i$ and $\lambda(y)=j$. Then $1 \le i, j \le 2m+2n+3$.
\begin{align}
k(m+n+1)&=  \sum_{uv\in
E(G)}\left[\lambda(u)+\lambda(uv)+\lambda(v)\right] \notag
\\ &=  \frac{ [1+(2m+2n+3)](2m+2n+3)}{2}+m\lambda(x) +n\lambda(y) \notag
\\ & = (m+n+2)(2m+2n+3)+mi +nj \notag
\\ & = (m+n+1)(2m+2n+5)+mi +nj+1. \label{eq2}
\end{align}
Since magic constant $k$ is a positive integer,  $mi+nj+1$  is a multiple of $m+n+1$ by the equality (\ref{eq2}). That is, $mi+nj+1=l(m+n+1)$ for some positive integer $l$. Then, by (\ref{eq2}),
\begin{equation}
k=2m+2n+5+l
\label{sep}
\end{equation}
Since it is impossible for both $i$ and $j$ to equal $1$, we have $l \ge 2$.  Let $d$ be  the greatest common divisor of $m$ and $n$. Then $m=dm'$ and $n=dn'$ for relatively prime positive integers $m'$ and $n'$. Suppose that $d=1$, that is,  $m$ and $n$ are relatively prime. Then, since $l \ge 2$, by the B\'{e}zout's identity, $l-1=\mu_1m+\nu_1n$ or $l=\mu_1m+\nu_1n+1$ for some integers $\mu_1$ and $\nu_1$.  Then, by (\ref{sep}),
$k=2m+2n+\mu_1m+\nu_1n+6=d(2m+2n+\mu_1m+\nu_1n)+6$.

Now suppose $d \ge 2$. By the division algorithm, $l=dq+r$ for some integers $q$ and $r$ with $0 \le r \le d-1$.  Then
\[mi+nj+1=(m+n+1)(dq+r)\]
or
\[m(i-dq-r)+(j-dq-r)n-dq=r-1.\]
Since the left hand side is divisible by $d$, $r-1$ is a multiple of $d$. Since $r \le d-1$, $r=1$ and so $l=dq+1$. Hence, by (\ref{sep}),
\[k=2m+2n+5+(dq+1)=d(2m'+2n'+q)+6\]
and we complete the proof.
\end{proof}
We may regard $S_{m,n}$ as a bipartite graph with bipartition $(X,Y)$ with $|X|=n+1$ and $|Y|=m+1$. In the proof of Theorem~\ref{thm:ctplr}, we have shown that $2(n+1)+4(m+1)=4m+2n+6$ is a magic constant for $b=n+1$. For the $\beta$-edge consecutive magic labeling given in the proof of Theorem~\ref{thm:ctplr} where $\beta=|Y|$, we define $\varphi: V(S_{n_1,n_2,\ldots,n_r})\cup E(S_{n_1,n_2,\ldots,n_r})\to\{1,2,\cdots,2\sum_{i=1}^rn_i + 2r-1\}$ by
$\varphi(y)=\lambda(y)$ for each $y \in Y$; $\varphi(x)=\lambda(x)-\alpha-\beta+1$ for each $x \in X$;
$\varphi(xy)=\lambda(xy)+\alpha$
for each pair of adjacent vertices $x$ and $y$ of $S_{n_1,n_2,\ldots,n_r}$. It can easily be checked that $\varphi$ is super edge-magic labeling by the fact that $\lambda$ is a $\beta$-edge consecutive magic labeling. Now we take two adjacent vertices $x$ and $y$ of the caterpillar. Then we may assume that $x \in X$ and $y \in Y$. By the definition, \[\varphi(x)+\varphi(y)+\varphi(xy)=(\lambda(x)-\alpha-\beta+1)+\lambda(y)+(\lambda(xy)+\alpha)=2\alpha+3\beta+1=3m+2n+6.\] Then, by Lemma~\ref{lem:another}(ii),
\[4|V(G)|+|E(G)|+3-(3m+2n+6)=4(m+n+2)+(m+n+1)+3-(3m+2n+6)=2m+3n+6\]
 for $b=m+n+2$.
Furthermore, recalling that the dual $\gamma'$ of a $b$-edge consecutive magic labeling $\gamma$ with a magic constant $k$ is a $(|V(G)|-b)$-edge consecutive magic labeling with the magic constant $3(|V(G)|+|E(G)|+1)-k$, we obtain \[3(|V(G)|+|E(G)|+1)-(2m+3n+6)=4m+3n+6\] is a magic constant of $G$ for $b=0$. Then, by Lemma~\ref{lem:another}(i),
\[2|V(G)|+5|E(G)|+3-(4m+3n+6)=2(m+n+2)+5(m+n+1)+3-(4m+3n+6)=3m+4n+6\]
is another magic constant for $b=0$.
By the symmetry, $4(n+1)+2(m+1)=2m+4n+6$ is a magic constant for $b=m+1$.

In the rest of paper, we take a look at a special type of a lobster which is obtained from a star graph $G$ by attaching a leaf to each leaf of $G$. For a positive integer $p$, we denote by $L_p$ the lobster obtained from a star with $p$ leaves in such a way. In addition, we denote the center of $L_p$ by $x$, the vertices at distance $1$ from $x$ by $y_1$, $\ldots$, $y_p$, the vertex adjacent to $y_i$ by $x_i$ for each $i=1$, $\ldots$, $p$. Now the following is true for $L_p$.
\begin{Thm}
For $p \ge 3$, $L_p$ has a $b$-edge consecutive magic labeling if and only if $b \in \{0,2p+1\}$.
\label{thm:lobster}
\end{Thm}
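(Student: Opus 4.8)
The plan is to reduce everything to Theorem~\ref{eachcase}. Since $L_p$ is a connected bipartite graph, indeed a tree, it falls into exactly one of the cases (i)--(iii) of that theorem. Regarding $L_p$ as bipartite, the center $x$ together with the outer leaves $x_1,\ldots,x_p$ form the even-distance part $X$, while $y_1,\ldots,y_p$ form $Y$; hence $|X|=p+1$, $|Y|=p$, $|V(L_p)|=2p+1$, and $|E(L_p)|=2p$. To obtain $b\in\{0,2p+1\}$ it suffices to show $L_p$ lies in case (ii), and for this I would (a) rule out any $|Y|$-edge (that is, $p$-edge) consecutive magic labeling, which already excludes case (iii) because (iii) demands such a labeling, and (b) exhibit at least one consecutive edge magic labeling, which excludes case (i). Then case (ii) yields exactly $b=0$ and $b=|V(L_p)|=2p+1$.

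The crux is step (a). Suppose $\lambda$ were a $p$-edge consecutive magic labeling with magic constant $k$. By the argument in the proof of Theorem~\ref{thm:bipartite}, together with $|X|=p+1>p=|Y|$, the set $\lambda(X)$ cannot sit inside $\{1,\ldots,p\}$, so it is forced that $\lambda(Y)=\{1,\ldots,p\}$, $\lambda(E(L_p))=\{p+1,\ldots,3p\}$, and $\lambda(X)=\{3p+1,\ldots,4p+1\}$. Writing $c=\lambda(x)$, the star edges $xy_i$ receive labels $k-c-\lambda(y_i)$; since $\{\lambda(y_i)\}=\{1,\ldots,p\}$ these are the consecutive block $\{k-c-p,\ldots,k-c-1\}$, and the pendant edges $y_ix_i$ must carry the complementary labels. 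Equivalently, the edge-sums $\lambda(u)+\lambda(v)=k-\lambda(uv)$ run over the $2p$ consecutive integers $T=\{k-3p,\ldots,k-p-1\}$, the star edges supply the sub-block $\{c+1,\ldots,c+p\}$, and the pendant sums $\lambda(x_i)+\lambda(y_i)$ must fill the rest, which splits into a low piece $L=\{t\in T: t\le c\}$ and a high piece $H=\{t\in T: t\ge c+p+1\}$.

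The key is a counting argument on the pendant edges. Because $1\le\lambda(y_i)\le p$, a pendant sum can lie in $L$ only if $\lambda(x_i)<c$ and in $H$ only if $\lambda(x_i)>c$. The outer labels $\{\lambda(x_i)\}$ are $\{3p+1,\ldots,4p+1\}\setminus\{c\}$, so exactly $c'-1$ of them lie below $c$ and $p+1-c'$ above, where $c'=c-3p\in\{1,\ldots,p+1\}$; matching these counts to $L$ and $H$ forces $|L|=c'-1$ and $|H|=p+1-c'$. A short computation then shows that the $c'-1$ inner labels paired into $L$ must sum to $c'-1$, and $c'-1$ distinct values from $\{1,\ldots,p\}$ sum to $c'-1$ only when $c'\le 2$. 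The symmetric computation on $H$ shows the $p+1-c'$ inner labels paired into $H$ must sum to $p(p+1-c')$, the largest value attainable by that many distinct labels at most $p$, which forces them to be the top labels and hence $c'\ge p$. Thus $p\le c'\le 2$, impossible once $p\ge 3$. This is exactly where the hypothesis $p\ge 3$ enters, and I expect it to be the main obstacle.

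For step (b) I would exhibit an explicit super edge-magic labeling of $L_p$ (whence, by taking its dual as in the discussion preceding Theorem~\ref{eachcase}, a $0$-edge consecutive magic labeling), placing the center at a suitable middle label and distributing the inner and outer labels so that the $2p$ edge-sums become consecutive; the verification is then a routine check that the edge-sum set is an interval. With (a) and (b) established, $L_p$ lies in case (ii) of Theorem~\ref{eachcase}, and therefore admits a $b$-edge consecutive magic labeling precisely when $b\in\{0,2p+1\}$, as claimed.
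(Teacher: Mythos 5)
Your logical skeleton is the same as the paper's --- restrict $b$ to $\{0,p,p+1,2p+1\}$ via Theorem~\ref{thm:bipartite}, kill one of the two middle values outright, and let the trichotomy/duality of Theorem~\ref{eachcase} finish --- but your treatment of the crucial impossibility step is genuinely different, and it is correct. The paper rules out $b=p+1$ (the dual case to yours, equivalent to it by the duality discussed before Theorem~\ref{eachcase}): there $\lambda(\{x,x_1,\dots,x_p\})=[p+1]$, and the argument is a local forcing one --- the vertex labeled $4p+1$ must be adjacent to the vertex labeled $i+1$, the vertex labeled $3p+2$ to the one labeled $i-1$, and so on --- followed by a case analysis on $i=\lambda(x)$ (the cases $i=1$, $i=2$, $i\ge 3$, $i=p+1$), each ending with a vertex that would need two incompatible neighbours. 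You instead rule out $b=p$ by a global counting argument: the block structure forces $k=6p+2$; the inner labels matched into the low piece $L$ must sum to $c'-1$, which forces $c'\le 2$ since $c'-1$ distinct positive integers sum to at least $\tbinom{c'}{2}$; and the inner labels matched into $H$ must sum to $p(p+1-c')$, the absolute maximum for that many distinct labels at most $p$, which forces $p+1-c'\le 1$, i.e.\ $c'\ge p$. I checked both computations and they are right, giving $p\le c'\le 2$, impossible for $p\ge 3$. Your version is shorter, avoids the case analysis, and makes transparent why $p=1,2$ escape (consistent with the paper's remark and Figure~\ref{fig:counterexample}), so this half is a clean alternative to the paper's proof.

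The gap is your step (b). The ``if'' direction of the theorem requires that $L_p$ actually admit a super edge-magic (equivalently, by duality, a $0$-edge consecutive magic) labeling, and you never produce one: you say you ``would exhibit'' a labeling with the center at ``a suitable middle label'' and call the verification routine. It is not routine; the paper handles this step by citing Kim and Park~\cite{KP}, where super edge-magicness of $L_p$ is a theorem in its own right. Concretely, if $s$ denotes the smallest edge sum, any such labeling must satisfy $(p-1)\lambda(x)+\sum_{i=1}^{p}\lambda(y_i)=2ps-4p-1$, and this congruence condition shifts with the parity of $p$, so no naive uniform assignment works: for $p=3$, taking the center label $4$ with inner labels $\{1,2,6\}$ paired to outer labels $7,5,3$ does work (edge sums $5,\dots,10$), but the ``natural'' inner set $\{1,2,3\}$ provably cannot work for any choice of center, since it would force $2\lambda(x)=6s-19$, an odd number. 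As written, then, your argument only establishes the ``only if'' direction; to close the proof you must either cite a source for the existence of a super edge-magic labeling of $L_p$ (as the paper does) or construct one explicitly for all $p\ge 3$, which is a genuine piece of work rather than a routine check.
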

\begin{proof}
Kim and Park~\cite{KP} showed that $L_p$ has an $(2p+1)$-edge consecutive magic labeling. Thus, by Theorem~\ref{dual1}, the `only if' part is true.

Now we show the `if' part. By Theorem~\ref{thm:bipartite}, $b \in \{0,p,p+1,2p+1\}$. For notational convenience, we denote $\{1,\ldots,p+1\}$ by $[p+1]$. To reach a contradiction, suppose that $b=p+1$. Then there is a $(p+1)$-edge consecutive magic labeling $\lambda$ of $L_p$ such that
\[\lambda(E(G))=\{p+2,\ldots, 3p+1\}.\]
By Proposition~\ref{prop:neighbor},
\[\lambda(\{x,x_1,\ldots,x_p\})=[p+1] \quad \mbox{and} \quad  \lambda(\{y_1,\ldots,y_p\})=\{3p+2, \ldots,4p+1\}.\]
Set $\lambda(x)=i$. Then $i \in [p+1]$.
Let $k$ be the magic constant corresponding to $\lambda$. Then the values of $\lambda$ for the edges joining $x$ and the vertices $y_1$, $\ldots$, $y_p$ are
\[k-4p-i-1, \ldots, k-3p-i-2.\]

Suppose that $i+j \in [p+1]$ for some integer $j$. Then $i+j$ is assigned to a vertex in $\{x,x_1,\ldots,x_p\}$. The set of  possible values of $\lambda$ for the edge joining a vertex in $\{y_1, \ldots, y_p\}$ and the vertex labeled with $i+j$ is
\[A(i,j):=\{k-[(i+j)+(4p+1)], \ldots, k-[(i+j)+(3p+2)]\}.\]
Then
\[A(i,1)=\{k-4p-i-2, \ldots, k-3p-i-3\}\]
and
\[A(i,-1):=\{k-4p-i, \ldots, k-3p-i-1\}.\]
We first note that all the elements of $A(i,1)$ except $k-4p-i-2=k-[(i+1)+(4p+1)]$ have been already assigned to edges joining $x$ and $y_1$, $\ldots$, $y_p$, and so  the vertex labeled with $4p+1$ must be joined with the one labeled with $i+1$. We also note that all the elements of $A(i,-1)$ except $k-3p-i-1=k-[(i-1)+(3p+2)]$ are occupied by edges joining $x$ and $y_1$, $\ldots$, $y_p$, and so  the vertex labeled with $3p+2$ must be joined with the one labeled with $i-1$

Now suppose that $i \ge 3$. Then
$i-2 \in [p+1]$ and $k-[(i-2)+(3p+2)]=k-3p-i$ is the only element in $A(i,-2)$ that was not assigned to edges joining $x$ and $y_1$, $\ldots$, $y_p$. Thus the vertex labeled with $i-2$ and the one labeled with $3p+2$ should be joined.  However,  $i-1 \in [p+1]$, and the vertex labeled with $3p+2$ must be joined to the one labeled with $i-1$ by the above argument and we reach a contradiction.

Now suppose that $i = 2$. Then $i+2=4 \in [p+1]$ since $p \ge 3$ and
\[A(i,2)=\{k-[(i+2)+(4p+1)], \ldots, k-[(i+2)+(3p+2)]\},\]
in which $k-[(i+2)+(4p+1)]=k-4p-i-3$ is the only available label. However, in that case, the vertex labeled with $i+2$ and the one labeled with $4p+1$ should be joined, which is a contradiction as the vertex labeled with $4p+1$ is already joined to the one labeled with $i+1$.

Now suppose that $i=1$. Then the values of $\lambda$ for the edges joining $x$ and vertices $y_1$, $\ldots$, $y_p$ are  $k-4p-2$, $\ldots$, $k-3p-3$.  Moreover,  $2=i+1 \in [p+1]$ since $p \ge 3$, and
\[A(1,1)=\{k-4p-3, \ldots, k-3p-4\},\]
in which $k-4p-3=k-[(1+1)+(4p+1)]$ is the only available label. Thus the vertex labeled $2$ and the one labeled with $4p+1$ are joined. Now $3=i+2 \in [p+1]$ since $p \ge 3$, and
\[A(1,2)=\{k-4p-4, \ldots, k-3p-5\}\] in which $k-4p-4=k-[(1+2)+(4p+1)]$ is the only available label for the edge incident to the vertex labeled with $3$. Then, however, the other end vertex of the edge must be labeled with $4p+1$, which is impossible as the vertex labeled with $4p+1$ is adjacent to the one labeled with $2$.

Suppose that $i=p+1$. Then the values of $\lambda$ for the edges joining $x$ and vertices $y_1$, $\ldots$, $y_p$ are  $k-5p-2$, $\ldots$, $k-4p-3$. Now for $p=i-1$,
 \[A(p+1,-1)=\{k-5p-1, \ldots, k-4p-2\}\]
in which $k-4p-2=k-[(p+1)+(-1)+(3p+2)]$ is the only available label for the edge incident to the vertex labeled with $p$. Thus the vertex labeled $p$ and the one labeled with $3p+2$ are joined. On the other hand, for $p-1=i-2$,
  \[A(p+1,-2)=\{k-5p, \ldots, k-4p-1\}\]
 in which $k-5p=k-[(p+1)+(-2)+(4p+1)]$ is the only available label for the edge incident to the vertex labeled with $p-1$. Then, however, the vertex labeled with $p-1$ and the one labeled $3p+2$ should be joined, which is impossible as the vertex labeled with $3p+2$ also must be adjacent to the one labeled with $p$.

Thus there is no $(p+1)$-edge consecutive magic labeling for $L_p$. Then, by Theorem~\ref{dual1}, the statement is true.
\end{proof}

\begin{Rem} For $p=1$ or $2$, the above theorem is false. We may assign $(p+1)$-edge consecutive magic labelings to $L_1$ and $L_2$, which are the paths $P_3$ and $P_5$ of lengths $2$ and $4$, respectively (see Figure~\ref{fig:counterexample}).
\end{Rem}
\begin{figure}
  \centering
  % Requires \usepackage{graphicx}
  \psfrag{1}{$1$}\psfrag{1}{$1$}\psfrag{2}{$2$}\psfrag{3}{$3$}\psfrag{4}{$4$}
  \psfrag{5}{$5$}\psfrag{6}{$6$}\psfrag{7}{$7$}\psfrag{8}{$8$}
  \psfrag{9}{$9$}\psfrag{A}{$L_1$}\psfrag{B}{$L_2$}
\includegraphics[width=5cm]{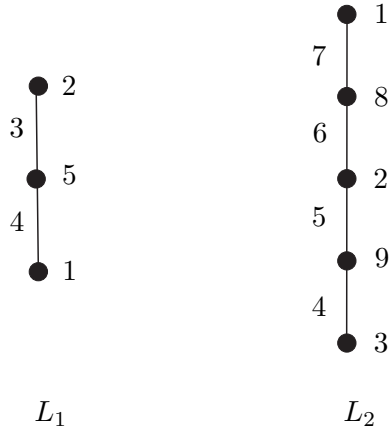}\\
  \caption{A $2$-edge consecutive magic labeling for $L_1$ and a $3$-edge consecutive magic labeling for $L_2$. }\label{fig:counterexample}
\end{figure}
\begin{Rem} By Theorem~\ref{thm:lobster}, we know that the converse of Theorem~\ref{dual} is false as the lobster $L_4$ is super edge-magic but it does not have a $b$-edge consecutive magic labeling for $b=4$ or $5$. Furthermore, $L_4$ is graceful as shown in Figure~\ref{fig:graceful}.
\end{Rem}
\begin{figure}
  \centering
\psfrag{0}{\small \bf 0}\psfrag{1}{\small \bf 1}\psfrag{2}{\small \bf 2}\psfrag{3}{\small \bf 3}\psfrag{4}{\small \bf 4}
  \psfrag{5}{\small \bf 5}\psfrag{6}{\small \bf 6}\psfrag{7}{\small \bf 7}\psfrag{8}{\small \bf 8}\psfrag{a}{\small\texttt 1}
  \psfrag{b}{\small\texttt 2}\psfrag{c}{\small\texttt 3}\psfrag{d}{\small\texttt 4}
  \psfrag{e}{\small\texttt 5}\psfrag{f}{\small\texttt 6}\psfrag{g}{\small\texttt 7}\psfrag{h}{\small\texttt 8}
\includegraphics[width=5cm]{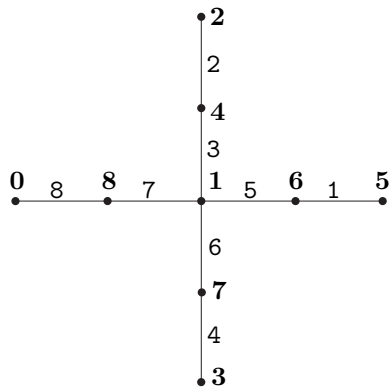}\\
  \caption{A graceful labeling of $L_4$. }
  \label{fig:graceful}
\end{figure}
\section{Closing Remarks}
We have shown that there are only two possible values of $b$ for which a connected non-bipartite graph has a $b$-edge consecutive magic labeling.

As Enomoto {\it et al.}~\cite{ELNR} showed that an odd cycle of length at least $3$ is super edge-magic and an even cycle does not have a super edge-magic labeling.  Thus, by Theorem~\ref{nonbipartite},  an odd cycle of length $l$ ($l \ge 3$) has a $b$-edge consecutive labeling if and only if $b \in \{0,l\}$.  Furthermore, by Theorems~\ref{dual} and \ref{eachcase}, an even cycle does have a consecutive edge magic labeling.

On the other hand, in the same paper, they proved that a complete bipartite graph $K_{m,n}$ is super edge-magic if and only if $m=1$ or $n=1$ or equivalently $K_{m,n}$ is not super edge-magic if and only if $m \ge 2$ and $n\ge 2$. This result together with Theorem~\ref{dual1} imply that $K_{m,n}$ has a total edge consecutive magic labeling  if and only if $m=1$ or $n=1$.

\end{document}